\documentclass[11pt]{amsart}
\usepackage{amsmath,amssymb,mathrsfs,color}
\usepackage{microtype,enumitem,float}
\usepackage{tikz}
\usetikzlibrary{arrows.meta,positioning,fit,calc}
\allowdisplaybreaks[2]
\usepackage{hyperref}
\hypersetup{
	colorlinks=true,
	linkcolor=blue,
	anchorcolor=blue,
	citecolor=blue}

\setlist{nosep,leftmargin=*,topsep=2pt,partopsep=0pt}
\AtBeginDocument{%
	\setlength{\abovedisplayskip}{5pt plus 2pt minus 2pt}%
	\setlength{\belowdisplayskip}{5pt plus 2pt minus 2pt}%
	\setlength{\abovedisplayshortskip}{3pt plus 1pt minus 1pt}%
	\setlength{\belowdisplayshortskip}{3pt plus 1pt minus 1pt}%
	\setlength{\textfloatsep}{9pt plus 2pt minus 2pt}%
	\setlength{\intextsep}{7pt plus 2pt minus 2pt}%
	\setlength{\floatsep}{7pt plus 2pt minus 2pt}%
}

\newtheorem{thm}{Theorem}[section]

\newtheorem{lem}[thm]{Lemma}
\newtheorem{prop}[thm]{Proposition}

\theoremstyle{definition}
\newtheorem{de}[thm]{Definition}
\theoremstyle{remark}
\newtheorem{rem}[thm]{Remark}

\numberwithin{equation}{section}

\allowdisplaybreaks[2]

\begin{document}

	\title[A criterion for the well-posedness of MVSDEs] {A criterion for the well-posedness of McKean--Vlasov stochastic differential equations}

	\author{Zhenxin Liu and Ziting Liu}
	\address{Zhenxin Liu: School of Mathematical Sciences,
		Dalian University of Technology, Dalian 116024, P. R. China}
	\email{zxliu@dlut.edu.cn}
	\address{Ziting Liu (Corresponding author): School of Mathematical Sciences,
		Dalian University of Technology, Dalian 116024, P. R. China}
	\email{ZitingLiu@outlook.com}
	

	\date{\today}
	
	\subjclass[2020]{60H10, 60H20}
	
	\keywords{Existence and uniqueness;  McKean--Vlasov SDE; Perron-type condition; Nagumo-type condition; Measure-dependent Lyapunov function}

\begin{abstract}
We establish strong existence and pathwise uniqueness for McKean--Vlasov stochastic differential equations with coefficients satisfying a distribution-dependent Lyapunov condition. Under a hybrid Perron--Nagumo condition that permits a non-integrable singularity at the initial time, pathwise uniqueness holds within the class of strong solutions satisfying the corresponding Lyapunov estimate. For existence, we truncate the coefficients on nested bounded domains, construct absorbed local weak solutions, pass to a weak solution via tightness arguments, and then apply a restricted Yamada--Watanabe theorem to obtain a strong solution. Our existence proof, different from the classical truncation-patching method, is interesting in its own right. We also provide an explicit example to which our criterion applies, while none of the Lipschitz, Osgood, or monotonicity conditions is satisfied.
\end{abstract}

\maketitle
	
\section{Introduction}
\setcounter{equation}{0}
McKean--Vlasov stochastic differential equations (MVSDEs), also called mean-field or distribution-dependent SDEs, arose from kinetic models related to the Boltzmann equation and were developed through the work of Kac \cite{FKT}, McKean \cite{MCK}, and Vlasov \cite{VLA}. They describe, among other phenomena, the limiting dynamics of large interacting particle systems and the associated propagation of chaos. We refer to Sznitman \cite{SZN} for a classical account.
	
Let us consider the following general MVSDE:
\begin{equation}\label{1.1}
dX(t)=b\big(t,X(t),\mathcal{L}_{X(t)}\big)dt+\sigma\big(t,X(t),\mathcal{L}_{X(t)}\big)dB(t),
\end{equation}
where $B$ is an $m$-dimensional Brownian motion and $\mathcal{L}_{X(t)}$ denotes the law of $X(t)$. Under the Lipschitz condition, well-posedness is classical. See \cite{SZN} for details. Motivated by Landau-type equations, Wang \cite{LANDAU} constructed strong solutions by iterating in distributions and proved uniqueness under continuity, monotonicity, and moment growth conditions. Ren, Tang, and Wang \cite{RP}, under sufficiently strong noise, proved existence and uniqueness for distribution-path dependent stochastic transport-type equations, arising from stochastic fluid mechanics with forces depending on the history and the environment. For MVSDEs with jumps, Erny \cite{EX} established strong
well-posedness under local Lipschitz conditions together with suitable growth conditions. Galeati, Harang, and Mayorcas \cite{GL} developed well-posedness criteria for MVSDEs driven by additive continuous noise under either Osgood-type, monotonicity, local Lipschitz, or Sobolev-type conditions. Li et al.~\cite{LY} studied the Euler--Maruyama approximation under local Lipschitz conditions in the state variable. Hong, Hu, and Liu \cite{LW} proved strong and weak well-posedness under local monotonicity conditions. Recently, Liu and Ma \cite{Liu-Ma} established existence and uniqueness under distribution-dependent Lyapunov conditions, using a path-space truncation argument. 
	
Classical SDEs, obtained from \eqref{1.1} by dropping the distribution dependence, provide a natural starting point for the present problem. In this setting, none of the Lipschitz, Osgood, or monotonicity conditions is individually necessary for pathwise uniqueness. Ikeda and Watanabe \cite{SDE}, for example, proved pathwise uniqueness for
\begin{equation}\label{1.2}
dX(t)=b\big(t,X(t)\big)dt+\sigma\big(t,X(t)\big)dB(t),
\end{equation}
under an Osgood modulus condition. Motivated by Constantin \cite{CON}, Negrea \cite{NEG} obtained pathwise uniqueness under a combination of Osgood-type and Nagumo-type conditions. Liu and Liu \cite{LZ} replaced the Osgood-type component by a Perron-type condition. In the McKean--Vlasov setting, Bahlali, Mezerdi, and Mezerdi \cite{BK} considered
\begin{equation}\label{1.3}
dX(t)=b\big(t,X(t),\mathcal{L}_{X(t)}\big)dt+\sigma\big(t,X(t)\big)dB(t),
\end{equation}
with bounded coefficients that are Lipschitz in the distribution variable and Osgood continuous in the state variable. Kalinin, Meyer--Brandis, and Proske \cite{KA} further developed multidimensional well-posedness criteria involving partial and local Osgood conditions. These developments suggest seeking a criterion that retains the flexibility of the Perron--Nagumo approach while accommodating both distribution dependence and Lyapunov localization. 
	
In the present paper, we establish the existence of a strong solution to (\ref{1.1}) satisfying an \emph{a priori} Lyapunov estimate, and we prove that pathwise uniqueness holds among all such strong solutions. The coefficients are localized via a distribution-dependent Lyapunov function, while a hybrid Perron--Nagumo increment condition controls the state and distribution variables and permits a non-integrable singularity at the initial time. Our uniqueness criterion extends the results of Liu and Ma \cite{Liu-Ma} by providing a Perron--Nagumo-type uniqueness condition. The existence proof proceeds by truncating the coefficients on nested bounded domains, constructing absorbed weak solutions locally, and passing to the limit via tightness to obtain a weak solution. A restricted Yamada--Watanabe theorem then yields a strong solution satisfying the Lyapunov estimate. 

Classical pathwise truncation-and-patching arguments for SDEs do not carry over directly to the McKean--Vlasov setting. Once the state process is truncated, its law deviates from the distribution parameter appearing in the coefficients, breaking the self-consistency between the process and the distribution variable. In the path-dependent framework, truncation in the path variable is natural. Ren, Tang, and Wang~\cite{RP} established well-posedness for path-dependent McKean--Vlasov equations by truncating the coefficients on the path space. Liu and Ma~\cite{Liu-Ma} adapted this path-space truncation strategy to (\ref{1.1}) under distribution-dependent Lyapunov conditions. Our existence proof proceeds along a different route, which is natural and simple, and relies on neither of these methods. We regard this method as one of the main contributions of our work.

The remainder of the paper is organized as follows. Section~2 collects auxiliary results, Section~3 proves the main theorem, and Section~4 presents an explicit example that satisfies the present criterion while violating the Lipschitz, Osgood, and monotonicity conditions.
	
\section{Preliminaries}
\setcounter{equation}{0}
We first introduce some notation. Let $\|\cdot\|_{\mathbb{R}^d}$ be the Euclidean vector norm. Let $A\in\mathbb{R}^{d\times m}$ be a matrix and define its norm by $\|A\|_{\mathbb{R}^{d\times m}}:=\sqrt{\textnormal{tr}(A^{\top}A)}$, where $\textnormal{tr}$ denotes the trace of a square matrix. Let $\mathcal{P}(\mathbb{R}^d)$ represent the space of all probability measures on $\mathbb{R}^d$ equipped with the weak topology and set
\[
\mathcal{P}_p(\mathbb{R}^d):=\bigg\{\mu\in\mathcal{P}(\mathbb{R}^d):\int_{\mathbb{R}^d}\|x\|^p_{\mathbb{R}^d}\mu(dx)<\infty\bigg\},
\]
where $p\geq1$. Then $\mathcal{P}_p(\mathbb{R}^d)$ is a Polish space under the $L^p$-Wasserstein distance
\[
\mathbb{W}_p(\mu,\nu):=\bigg(\inf\limits_{\gamma\in\Gamma(\mu,\nu)}\int_{\mathbb{R}^d\times\mathbb{R}^d}\|x-y\|^p_{\mathbb{R}^d}\gamma(dx,dy)\bigg)^{\frac{1}{p}},\quad\mu,\nu\in\mathcal{P}_p(\mathbb{R}^d),
\]
where $\Gamma(\mu,\nu)$ stands for the set of all couplings for $\mu$ and $\nu$.
	
Let $\mathcal{C}_T:=C([0,T];\mathbb{R}^d)$, equipped with the uniform norm $\|\xi\|_T:=\sup_{t\in[0,T]}\|\xi(t)\|_{\mathbb{R}^d}$ for $T>0$. We denote by $\mathcal{P}_T$ the space of all probability measures on $\mathcal{C}_T$ equipped with the weak topology. For $p\geq1$, we define the subset $\mathcal{P}_{p,T}$ by
\[
\mathcal{P}_{p,T}:=\bigg\{\mu\in\mathcal{P}_T:\int_{\mathcal{C}_T}\|\xi\|_T^p\mu(d\xi)<\infty\bigg\}.
\]
	
We now consider the following MVSDE:
\begin{equation}\label{2.1}
\begin{cases}
dX(t)=b(t,X(t),\mathcal{L}_{X(t)})dt+\sigma(t,X(t),\mathcal{L}_{X(t)})dB(t),\\
X(0)=X_0.
\end{cases}
\end{equation}
Here $b:[0,\infty)\times\mathbb{R}^d\times\mathcal{P}_2(\mathbb{R}^d)\rightarrow\mathbb{R}^d$ and $\sigma:[0,\infty)\times\mathbb{R}^d\times\mathcal{P}_2(\mathbb{R}^d)\rightarrow\mathbb{R}^{d\times m}$ are measurable maps, $B$ is a standard $m$-dimensional Brownian motion defined on the complete filtered probability space $(\Omega,\mathcal{F},\{\mathcal{F}_t\}_{t\geq0},\mathbb{P})$, and $X_0$ is $\mathcal{F}_0$-measurable.
	
We next define strong and weak solutions, as well as pathwise uniqueness. 
	
\begin{de}\label{de.2.1}(1) Fix a filtered probability space carrying an $\mathbb{R}^m$-valued Brownian motion $B$ and an $\mathcal{F}_0$-measurable random variable $X_0$. An adapted continuous $\mathbb{R}^d$-valued process $X$ is called a \emph{strong solution} of (\ref{2.1}) if
\[
\mathbb{E} \int_{0}^{t}\bigg\{\big\|b(s,X(s),\mathcal{L}_{X(s)})\big\|_{\mathbb{R}^d}+\big\|\sigma(s,X(s),\mathcal{L}_{X(s)})\big\|^2_{\mathbb{R}^{d\times m}}\bigg\}ds<\infty,\,\, t\geq0,
\]
and if, $\mathbb{P}$-almost surely,
\[
X(t)=X_0+\int_{0}^{t}b(s,X(s),\mathcal{L}_{X(s)})ds+\int_{0}^{t}\sigma(s,X(s),\mathcal{L}_{X(s)})dB(s),\,\, t\geq0,
\]
where $\mathcal L_{X(t)}\in\mathcal P_2(\mathbb R^d)$ for every $t\geq0$. \emph{Pathwise uniqueness} holds if any two such strong solutions defined with respect to the same $B$ and the same $X_0$ coincide almost surely.
		
(2) A pair $(\tilde{X}(t),\tilde{B}(t))_{t\geq0}$ is called a \emph{weak solution} of $(\ref{2.1})$ if it solves the equation on some stochastic basis $(\tilde{\Omega},\tilde{\mathcal{F}},\{\tilde{\mathcal{F}}_t\}_{t\geq0},\tilde{\mathbb{P}})$ and $\tilde B$ is an $\mathbb R^m$-valued Brownian motion on that basis.
\end{de}
	
We next collect several auxiliary results used in Section~3. 
	
\begin{de}\label{definition.u}
For a continuous function $F:[0,t_0]\times\mathbb{R}\rightarrow\mathbb{R}$,
\begin{enumerate}
\item A continuous function $f:[0,t_0]\rightarrow\mathbb{R}$ is a {\it lower function} of $F$ if $f(0)=0$ and $D_{\pm}f(t)<F(t,f(t))$ for $t\in(0,t_0)$. Moreover, $D_+f(0)<F(0,f(0))$ and $D_-f(t_0)<F(t_0,f(t_0))$.
\item A continuous function $g:[0,t_0]\rightarrow\mathbb{R}$ is an {\it upper function} of $F$ if $g(0)=0$ and $D_{\pm}g(t)>F(t,g(t))$ for $t\in(0,t_0)$. Moreover, $D_+g(0)>F(0,g(0))$ and $D_-g(t_0)>F(t_0,g(t_0))$.
\end{enumerate}
Here, $D_{+}$ and $D_{-}$ denote the forward and backward derivatives, respectively. The definition presupposes that these one-sided derivatives exist at every $t\in[0,t_0]$.
\end{de}
\begin{de}\label{5.22}
Fix an atomless probability space $(\Omega,\mathcal F,\mathbb P)$ rich enough to support a random variable with any prescribed
law in $\mathcal P_2(\mathbb R^d)$. A function
$u:\mathcal P_2(\mathbb R^d)\to\mathbb R$ is said to be {\it L-differentiable} at $\mu_0\in\mathcal{P}_2(\mathbb{R}^d)$ if there exists a random variable $X_0$ with law $\mu_0$ such that the lifted function $\tilde{u}$ is Fr\'{e}chet differentiable at $X_0$. The lift of the function $\mathcal{P}_2(\mathbb{R}^d)\ni\mu\mapsto u(\mu)$ is the function $\tilde{u}$ defined on the Hilbert space $L^2(\Omega,\mathcal{F},\mathbb{P};\mathbb{R}^d)$ by $\tilde{u}(X):=u(\mathcal{L}_X)$. 
\end{de}
		
\begin{prop}[Joint Chain Rule]\cite[Proposition 5.102]{CAR}\label{lem.chain rule}
For a given $T>0$, let $V:[0,T]\times\mathbb{R}^d\times\mathcal{P}_2(\mathbb{R}^d)\to\mathbb{R}$ be a continuous function such that
\begin{enumerate}
\item[$\rm(1)$] For any $\mu\in\mathcal{P}_2(\mathbb{R}^d)$, the function $[0,T]\times\mathbb{R}^d\ni(t,x)\mapsto V(t,x,\mu)$ is of class $\mathcal{C}^{1,2}$, namely $\partial_tV$, $\partial_xV$ and $\partial^2_{xx}V$ are continuous in $(t,x,\mu)$.	
\item[$\rm(2)$] For any $(t,x)\in[0,T]\times\mathbb{R}^d$, the function $\mathcal{P}_2(\mathbb{R}^d)\ni\mu\mapsto V(t,x,\mu)$ is continuously L-differentiable and, for any $\mu\in\mathcal{P}_2(\mathbb{R}^d)$, we can find a version of the mapping $\mathbb{R}^d\ni v\mapsto\partial_\mu V(t,x,\mu)(v)$ such that the mapping $[0,T]\times\mathbb{R}^d\times\mathcal{P}_2(\mathbb{R}^d)\times\mathbb{R}^d\ni(t,x,\mu,v)\mapsto\partial_\mu V(t,x,\mu)(v)$ is locally bounded and is continuous at any $(t,x,\mu,v)$ such that $v\in\textnormal{supp}(\mu)$.
\item[$\rm(3)$] For the version of $\partial_\mu V$ mentioned above and for any $(t,x,\mu)\in[0,T]\times\mathbb{R}^d\times\mathcal{P}_2(\mathbb{R}^d)$, the mapping $\mathbb{R}^d\ni v\mapsto\partial_\mu V(t,x,\mu)(v)\in\mathbb{R}^d$ is continuously differentiable and its derivative, denoted by $\mathbb{R}^d\ni v\mapsto\partial_v\partial_\mu V(t,x,\mu)(v)\in\mathbb{R}^{d\times d}$, is locally bounded and is jointly continuous in $(t,x,\mu,v)$ at any point $(t,x,\mu,v)$ such that $v\in\textnormal{supp}(\mu)$.
\end{enumerate}
Assume further that, for every compact subset $\mathcal{K}\subset\mathbb{R}^d\times\mathcal{P}_2(\mathbb{R}^d)$, 
\[
\sup_{(t,x,\mu)\in[0,T]\times\mathcal{K}}\Big\{\int_{\mathbb{R}^d}\big\|\partial_\mu V(t,x,\mu)(v)\big\|^2_{\mathbb{R}^d}d\mu(v)+\int_{\mathbb{R}^d}\big\|\partial_v\partial_\mu V(t,x,\mu)(v)\big\|^2_{\mathbb{R}^{d\times d}}d\mu(v)\Big\}<\infty.
\]
Moreover, let $\{X(t)\}_{0\leq t\leq T}$ be an It\^o process of the form 
\[
dX(t)=b(t)dt+\sigma(t)dB(t),\,\,X(0)=X_0\in L^2(\Omega,\mathcal{F},\mathbb{P}),
\]
where $\{B(t)\}_{t\geq0}$ is an $\{\mathcal{F}_t\}_{t\geq0}$-Brownian motion with values in $\mathbb{R}^m$, $\{b(t)\}_{t\geq0}$ and $\{\sigma(t)\}_{t\geq0}$ are $\{\mathcal{F}_t\}_{t\geq0}$-progressively measurable processes with values in $\mathbb{R}^d$ and $\mathbb{R}^{d\times m}$, respectively, and satisfy
\[
\mathbb{E}\Big\{\int_{0}^{T}\big(\|b(t)\|^2_{\mathbb{R}^d}+\|\sigma(t)\|^2_{\mathbb{R}^{d\times m}}\big)dt\Big\}<\infty.
\] 
Let $\{\xi(t)\}_{t\in[0,T]}$ be another $d$-dimensional It\^{o} process on the same filtered probability space $(\Omega,\mathcal{F},\{\mathcal{F}_t\}_{t\geq0},\mathbb{P})$ of the form $d\xi(t)=\eta(t)dt+\gamma(t)dB(t)$, $\xi(0)=\xi_0\in L^2(\Omega,\mathcal{F},\mathbb{P})$, where the $\{\mathcal{F}_t\}_{t\geq0}$-progressively measurable processes $\{\eta(t)\}_{t\in[0,T]}$ and $\{\gamma(t)\}_{t\in[0,T]}$ with values in $\mathbb{R}^d$ and $\mathbb{R}^{d\times m}$, respectively, satisfy
\[
\mathbb{P}\Big\{\int_{0}^{T}\big(\|\eta(t)\|_{\mathbb{R}^d}+\|\gamma(t)\|^2_{\mathbb{R}^{d\times m}}\big)dt<\infty\Big\}=1.
\]
Then, almost surely, for all $t\in[0,T]$,
\begin{equation}\label{Joint chain rule}
\begin{aligned}
V(t,\xi(t),\mathcal{L}_{X(t)})&=V(0,\xi_0,\mathcal{L}_{X_0})+\int_{0}^{t}\partial_xV(r,\xi(r),\mathcal{L}_{X(r)})\cdot(\gamma(r)dB(r))\\
&+\int_{0}^{t}\big(\partial_tV(r,\xi(r),\mathcal{L}_{X(r)})+\partial_xV(r,\xi(r),\mathcal{L}_{X(r)})\cdot\eta(r)\big)dr\\				&+\frac{1}{2}\int_{0}^{t}\textnormal{tr}\big[\partial^2_{xx}V(r,\xi(r),\mathcal{L}_{X(r)})(\gamma\gamma^\top)(r)\big]dr\\
&+\int_{0}^{t}\tilde{\mathbb{E}}\big[\partial_\mu V(r,\xi(r),\mathcal{L}_{X(r)})(\tilde{X}(r))\cdot\tilde{b}(r)\big]dr\\
&+\frac{1}{2}\int_{0}^{t}\tilde{\mathbb{E}}\big[\textnormal{tr}\big(\partial_v\partial_\mu V(r,\xi(r),\mathcal{L}_{X(r)})(\tilde{X}(r))(\tilde{\sigma}\tilde{\sigma}^\top)(r)\big)\big]dr,
\end{aligned}	
\end{equation}
where the process $(\tilde{X}(t),\tilde{b}(t),\tilde{\sigma}(t))_{t\in[0,T]}$ is a copy of the process $(X(t),b(t),\sigma(t))_{t\in[0,T]}$ defined on a copy $(\tilde{\Omega},\tilde{\mathcal{F}},\tilde{\mathbb{P}})$ of $(\Omega,\mathcal{F},\mathbb{P})$.
\end{prop}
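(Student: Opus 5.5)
The plan is to split the problem into two classical ingredients and combine them by a time-discretization argument. The first ingredient is the chain rule along a flow of measures. For a function $u:\mathcal P_2(\mathbb R^d)\to\mathbb R$ satisfying the regularity in (2)--(3) and the integrability bound, I would show that
\[
u(\mathcal L_{X(t)})=u(\mathcal L_{X_0})+\int_0^t\mathbb E\big[\partial_\mu u(\mathcal L_{X(r)})(X(r))\cdot b(r)+\tfrac12\mathrm{tr}\big(\partial_v\partial_\mu u(\mathcal L_{X(r)})(X(r))\sigma\sigma^\top(r)\big)\big]dr .
\]
I would prove this in two steps.
\begin{itemize}
\item First treat the case where $\partial_\mu u$ and $\partial_v\partial_\mu u$ are bounded and uniformly continuous. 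Mollify $X$ by adding an independent small Gaussian, so that the laws have full support and the continuity at $v\in\operatorname{supp}(\mu)$ becomes global continuity.
\item Then pass to the limit, using the $L^2$ bound on $b,\sigma$ and the uniform integrability of $\partial_\mu u$, $\partial_v\partial_\mu u$ over compacts.
\end{itemize}
For the core finite-dimensional step I would use particle approximation. Take $N$ i.i.d. copies $X^i$ and set $u^N(x_1,\dots,x_N)=u(\frac1N\sum\delta_{x_i})$. Its derivatives are $\partial_{x_i}u^N=\frac1N\partial_\mu u(\bar\mu^N)(x_i)$, and its second derivatives are $\frac1N\partial_v\partial_\mu u$ plus an $O(1/N^2)$ term. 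Apply the classical It\^o formula to $u^N(X^1_t,\dots,X^N_t)$, take expectations, and let $N\to\infty$ using $\mathbb W_2(\bar\mu^N_t,\mathcal L_{X(t)})\to0$ in $L^2$.

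The second ingredient is the joint formula. Since $\xi$ is random while $\mu_t=\mathcal L_{X(t)}$ is a deterministic continuous curve in $\mathcal P_2$, I would take a partition $0=t_0<\dots<t_n=t$ and telescope:
\[
V(t_{k+1},\xi(t_{k+1}),\mu_{t_{k+1}})-V(t_k,\xi(t_k),\mu_{t_k})=\big[V(t_{k+1},\xi(t_{k+1}),\mu_{t_{k+1}})-V(t_{k+1},\xi(t_{k+1}),\mu_{t_k})\big]+\big[V(t_{k+1},\xi(t_{k+1}),\mu_{t_k})-V(t_k,\xi(t_k),\mu_{t_k})\big].
\]
\begin{itemize}
\item The second bracket is handled by the classical time-dependent It\^o formula for $(s,x)\mapsto V(s,x,\mu_{t_k})$ applied to $\xi$ on $[t_k,t_{k+1}]$. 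It produces the $\partial_t V$, the $\partial_xV\cdot\eta$ and $\gamma\,dB$, and the trace terms, with $\mu$ frozen at $\mu_{t_k}$.
\item The first bracket is handled by the measure chain rule above, with $x=\xi(t_{k+1})$ frozen. Since that variable is independent of the copy space, the expectation is taken on $(\tilde\Omega,\tilde{\mathbb P})$. This yields the $\tilde{\mathbb E}[\partial_\mu V\cdot\tilde b]$ and $\tilde{\mathbb E}[\mathrm{tr}(\partial_v\partial_\mu V\,\tilde\sigma\tilde\sigma^\top)]$ integrands evaluated at $x=\xi(t_{k+1})$.
\end{itemize}
Summing over $k$ and letting the mesh go to zero gives the formula, provided the frozen arguments converge.

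To justify the limit I would localize $\xi$ with stopping times $\tau_R=\inf\{s:\|\xi(s)\|\ge R\}$. On $[0,\tau_R]$ the pair $(\xi(s),\mu_s)$ lies in a compact subset of $\mathbb R^d\times\mathcal P_2(\mathbb R^d)$, because $s\mapsto\mu_s$ is continuous in $\mathbb W_2$ by the $L^2$ integrability of $b,\sigma$. The assumed continuity of $\partial_tV,\partial_xV,\partial^2_{xx}V$ in $(t,x,\mu)$ then gives uniform continuity there, and the stochastic and Lebesgue integrals converge in probability. For the measure terms I would use the uniform $L^2(\mu)$ bound over compacts together with continuity at points of the support and Vitali's theorem to pass to the limit in $\tilde{\mathbb E}[\cdots]$. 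Finally I would let $R\to\infty$, and upgrade from a fixed $t$ to all $t$ almost surely by continuity of both sides.

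The main obstacle is the measure-only chain rule, specifically the particle approximation. The delicate points are that $\partial_\mu u(\mu)(\cdot)$ is only continuous on $\operatorname{supp}\mu$, and that the empirical measures are not supported where $\mathcal L_{X(t)}$ is. The first thing I would try is the Gaussian regularization $X^\varepsilon=X+\varepsilon G$, which makes every law of full support and restores continuity. I would then remove $\varepsilon$ using the local boundedness and the square-integrability hypotheses. This is exactly the route in the cited \cite[Proposition 5.102]{CAR}, so in the paper it suffices to invoke that reference.
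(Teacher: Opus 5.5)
The paper gives no proof of this proposition; it only quotes \cite{CAR}, and your closing remark does the same. The sketch you give before that remark would not, however, prove the statement as written, for two concrete reasons.

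\emph{The particle step uses regularity that is not assumed.} Hypotheses (2)--(3) are of partial $\mathcal C^2$ type. They provide $\partial_\mu V$ and its $v$-derivative, but say nothing about differentiating $\mu\mapsto\partial_\mu V(t,x,\mu)(v)$.

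For $u^N(x_1,\dots,x_N)=u(\bar\mu^N_x)$ with $\bar\mu^N_x:=\frac1N\sum_i\delta_{x_i}$, you do get $\partial_{x_i}u^N=\frac1N\partial_\mu u(\bar\mu^N_x)(x_i)$. But differentiating again in any $x_j$, including $j=i$, moves an atom of $\bar\mu^N_x$. Your ``$O(1/N^2)$ term'' is $\frac1{N^2}\partial^2_\mu u(\bar\mu^N_x)(x_i,x_j)$, which need not exist. So $u^N$ is in general only $C^1$, the classical It\^o formula does not apply to it, and adding $\varepsilon G$ to $X$ does not change this.

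The support issue you single out is actually harmless. Continuity is only needed at limit points $(\mathcal L_{X(r)},X(r))$, and $X(r)\in\operatorname{supp}\mathcal L_{X(r)}$ a.s. Empirical projection is the argument for fully $\mathcal C^2$ functions; here you need a mechanism using only $\partial_\mu u$ and $\partial_v\partial_\mu u$. One that works goes as follows:
\begin{enumerate}
\item Reduce to simple bounded $(b,\sigma)$.
\item On a small interval $[s,s']$ inside a grid cell, put $\Delta X:=X(s')-X(s)$, $Y^\lambda:=X(s)+\lambda\Delta X$ and $\mu^\lambda:=\mathcal L_{Y^\lambda}$. First-order calculus for the lift gives $u(\mathcal L_{X(s')})-u(\mathcal L_{X(s)})=\int_0^1\mathbb E[\partial_\mu u(\mu^\lambda)(Y^\lambda)\cdot\Delta X]\,d\lambda$.
\item Gaussian integration by parts, conditionally on $\mathcal F_s$, turns $\mathbb E[\partial_\mu u(\mu^\lambda)(Y^\lambda)\cdot\sigma(s)(B(s')-B(s))]$ into $\lambda(s'-s)\,\mathbb E[\textnormal{tr}(\partial_v\partial_\mu u(\mu^\lambda)(Y^\lambda)\sigma\sigma^\top(s))]$.
\item Since $\int_0^1\lambda\,d\lambda=\frac12$, this produces the trace term with no Taylor remainder. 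The mesh limit then only uses continuity at support points.
\end{enumerate}

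\emph{The integrability in your limit passage is too weak.} The compact-set hypothesis controls $\partial_v\partial_\mu V(\cdot)(\tilde X(r))$ only in $L^2(\tilde{\mathbb P})$, since it is merely locally bounded in $v$. Cauchy--Schwarz then bounds the last term through $\mathbb E\int_0^T\|\sigma(t)\|^4dt$, and an $L^2$ bound on $\sigma$ is genuinely not enough.

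For example, take $d=m=1$ and $V(t,x,\mu)=\int\phi\,d\mu$ with $\phi(v)=\int_0^v\sin(s^2)\,ds$. This $V$ satisfies (1)--(3) and the compact-set bound, because $|\phi'|\le1$ and $|\phi''(v)|\le2|v|$. Take $b=0$ and $\sigma\equiv X_0=Z$, with $Z$ $\mathcal F_0$-measurable and $Z\in L^2\setminus L^3$. Then $X(r)=Z(1+B(r))$ and $\mathbb E\big[|\phi''(X(r))|Z^2\big]=\infty$ for every $r>0$. So the last term of the formula is undefined, although all hypotheses as transcribed hold.

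Your argument therefore needs $\mathbb E\int_0^T\|\sigma\|^4dt<\infty$, or a bound on $\partial_v\partial_\mu V$. To my knowledge, the fourth-moment condition is the one under which \cite{CAR} states the chain rule. In the paper's application the coefficients are bounded on $\overline D_k$, so nothing breaks there. With these two repairs, your telescoping reduction of the joint formula to the measure-only one and your localization in $\xi$ go through.
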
 
	
We define the operator $\mathcal{L}$ by
\begin{equation}\label{generator*}
\begin{aligned}
(\mathcal{L}V)(t,x,\mu)&:=\partial_tV(t,x,\mu)+\partial_xV(t,x,\mu)\cdot b(t,x,\mu)\\
&\quad+\frac{1}{2}\textnormal{tr}[\partial^2_{xx}V(t,x,\mu)\sigma\sigma^\top(t,x,\mu)]\\
&\quad+\int_{\mathbb{R}^d}\partial_\mu V(t,x,\mu)(y)\cdot b(t,y,\mu)d\mu(y)\\
&\quad+\frac{1}{2}\int_{\mathbb{R}^d}\textnormal{tr}[\partial_v\partial_\mu V(t,x,\mu)(y)\sigma\sigma^\top(t,y,\mu)]d\mu(y).
\end{aligned}
\end{equation}
Moreover, define the integrated Lyapunov functional $\mathcal{V}:[0,T]\times\mathcal{P}_2(\mathbb{R}^d)\rightarrow\mathbb{R}$ by
\begin{equation}\label{functional}
\mathcal{V}(t,\mu):=\int_{\mathbb{R}^d}V(t,x,\mu)\mu(dx).
\end{equation}
For later use, we introduce the notation
\begin{equation}\label{marginal LE}
\mathcal{G}\mathcal{V}(t,z,\mu):=b(t,z,\mu)\cdot\partial_\mu\mathcal{V}(t,\mu)(z)+\frac{1}{2}\textnormal{tr}\big(\sigma\sigma^\top(t,z,\mu)\partial_z\partial_\mu\mathcal{V}(t,\mu)(z)\big).
\end{equation}
	
The Skorokhod representation theorem states that the weak limit of a sequence of probability measures can be realized as the almost sure limit of a sequence of random variables on a common probability space.
	
\begin{prop}[Skorokhod representation theorem]\cite[Theorem 6.7]{CPM}\label{Skorokhod}
Let $E$ be a separable metric space, and let $\{\mu_n\}_{n \geq 1}$ be a sequence of probability measures on $E$ that converges weakly to a probability measure $\mu$. Then there exist a probability space $(\tilde{\Omega}, \tilde{\mathcal{F}}, \tilde{\mathbb{P}})$ and random variables $\tilde{X}_n, \tilde{X} : \tilde{\Omega} \to E$ such that
\begin{enumerate}
\item[$\rm(1)$] $\tilde{\mathcal{L}}_{\tilde{X}_n} = \mu_n$ and $\tilde{\mathcal{L}}_{\tilde{X}} = \mu$ under $\tilde{\mathbb{P}}$.
\item[$\rm(2)$] $\tilde{X}_n \to \tilde{X}$ $\tilde{\mathbb{P}}$-almost surely as $n \to \infty$.
\end{enumerate}
\end{prop}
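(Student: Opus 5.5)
The Skorokhod representation theorem is a cited classical result, so the plan follows the standard coupling construction. I would first reduce to the case where $\mu$ has full support, or at least work throughout on a countable partition adapted to $\mu$. The realizing space will be $\tilde\Omega=(0,1)$ with its Borel $\sigma$-field and Lebesgue measure $\tilde{\mathbb P}$.

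\textbf{Step 1 (partitions).} Using separability, for each $k\ge1$ I construct a countable partition $\{E^k_i\}_{i\ge1}$ of $E$ into Borel sets with the following properties:
\begin{enumerate}
\item each set has diameter $<2^{-k}$ and $\mu(\partial E^k_i)=0$;
\item each partition refines the previous one.
\end{enumerate}
To get the boundary condition, take balls $B(x_j,r)$ around a countable dense set $\{x_j\}$ with radii $r\in(2^{-k-2},2^{-k-1})$. For each $j$ only countably many radii give spheres of positive $\mu$-mass, so such an $r$ can be chosen. Then set $E^k_i$ to be successive differences of these balls, intersected with the level-$(k-1)$ sets.

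\textbf{Step 2 (interval arrangement).} Since $\mu(\partial E^k_i)=0$, the portmanteau theorem gives $\mu_n(E^k_i)\to\mu(E^k_i)$ for every $i,k$. For each level $k$ I arrange consecutive subintervals $I^k_i\subset(0,1)$ of length $\mu(E^k_i)$, and likewise $I^{k}_{i,n}$ of length $\mu_n(E^k_i)$. The ordering is lexicographic, consistent with the refinement, so the intervals are nested across levels. Because the lengths converge and the ordering is fixed, the left endpoints of $I^k_{i,n}$ converge to those of $I^k_i$. (For countably many cells this uses dominated convergence for the tail sums: $\sum_{j<i}$ is a finite sum, so it is fine.) For $\omega$ in the interior of $I^k_i$, it follows that $\omega\in I^k_{i,n}$ for all large $n$.

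\textbf{Step 3 (limit variables).} Pick points $x^k_i\in E^k_i$. Define $X^k(\omega)=x^k_i$ for $\omega\in I^k_i$, and $X^k_n(\omega)=x^k_i$ for $\omega\in I^k_{i,n}$. By nestedness and the diameter bound, $X^k(\omega)$ is Cauchy in $k$, so it converges to some $\tilde X(\omega)$ in the completion of $E$. Likewise $X^k_n$ converges to some $\tilde X_n$.

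The main obstacle is that $E$ need not be complete, so these limits may fail to lie in $E$. To fix this I would not take limits of the points $x^k_i$. Instead I will use the classical trick from Billingsley's proof: for each $n$, the nested decreasing closed sets $\overline{E^k_{i_k}}$ carry positive $\mu_n$-mass only in a way that determines the law. More simply, I define $\tilde X_n$ on $I^k_{i,n}$ at the finest level via a measurable conditional-quantile map. Concretely, $\tilde X_n$ is chosen so that on each $I^k_{i,n}$ its law is $\mu_n(\cdot\mid E^k_i)$ rescaled, and consistency across $k$ is ensured by defining $\tilde X_n$ by one construction where all levels are compatible; this is possible because $\mu_n$ restricted to the sets $E^k_i$ is inherited by the sub-intervals. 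The same construction defines $\tilde X$ from $\mu$. With this definition, $\tilde{\mathcal L}_{\tilde X_n}=\mu_n$ and $\tilde{\mathcal L}_{\tilde X}=\mu$ hold by construction, which gives (1).

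\textbf{Step 4 (a.s.\ convergence).} Let $N$ be the countable set of all interval endpoints, which has Lebesgue measure zero. Fix $\omega\notin N$ and $k$, and let $i$ be such that $\omega\in\mathrm{int}\,I^k_i$. By Step 2, for large $n$ both $\tilde X_n(\omega)$ and $\tilde X(\omega)$ lie in $E^k_i$, so
$$d(\tilde X_n(\omega),\tilde X(\omega))<2^{-k}.$$
Since $k$ is arbitrary, $\tilde X_n\to\tilde X$ almost surely, which gives (2).
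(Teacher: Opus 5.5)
The gap is in Step~3, and it cannot be closed on the space you fixed. Asking that $\tilde X_n$ have law $\mu_n(\cdot\mid E^k_i)$ on $I^k_{i,n}$ for every $k$ forces $\tilde X_n(\omega)\in\bigcap_k E^k_{i_k(\omega)}$ for a.e.\ $\omega$. Since the diameters tend to $0$, $\tilde X_n(\omega)$ must then be the unique point of that intersection, i.e.\ exactly the limit of $x^k_{i_k(\omega)}$ you set out to avoid. When $E$ is incomplete that intersection can be empty. There is no ``finest level'' and no quantile map on a metric space, so ``one construction where all levels are compatible'' restates the difficulty rather than resolving it. Likewise $\tilde{\mathcal L}_{\tilde X_n}=\mu_n$ ``by construction'' is unproved, and Step~4 relies on the same property.

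Worse, with $\tilde\Omega=(0,1)$ and Lebesgue measure $\lambda$ the claim is false in this generality. Let $E\subset[0,1]$ have inner measure $0$ and outer measure $1$ (a Bernstein set), and take $\mu_n=\mu$ with $\mu(B\cap E):=\lambda(B)$ for Borel $B$. Suppose $Y:(0,1)\to E$ were Borel with law $\mu$. Its image is analytic, hence Lebesgue measurable, and contained in $E$, so it lies in a Borel set $N$ with $\lambda(N)=0$. Then $1=\lambda(Y^{-1}(N\cap E))=\mu(N\cap E)=\lambda(N)=0$. So no random element on your $\tilde\Omega$ has law $\mu$, whatever the construction.

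The missing idea, which is the route of the cited source (Billingsley, Thm.~6.7) for incomplete $E$, is never to manufacture points of $E$ as limits. Instead, take $\tilde X$ with law $\mu$ directly and build each $\tilde X_n$ from one level only.

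\begin{enumerate}
\item At level $k$, use finitely many disjoint cells $E^k_1,\dots,E^k_{j_k}$ of diameter $<2^{-k}$ with $\mu$-null boundaries, plus a remainder $E^k_0$ with $\mu(E^k_0)<2^{-k}$. Finiteness lets you pick $n_1<n_2<\cdots$ with $\mu_n(E^k_i)\ge(1-2^{-k})\mu(E^k_i)$ for all $i\le j_k$ and $n\ge n_k$.
\item Work on a product space carrying $\tilde X$, an independent uniform $\xi$, and independent $Y^k_{n,i}\sim\mu_n(\cdot\mid E^k_i)$ and $Z^k_n$. For $n_k\le n<n_{k+1}$, put $\tilde X_n:=Y^k_{n,i}$ on $\{\tilde X\in E^k_i,\ \xi\le1-2^{-k}\}$, $1\le i\le j_k$, and $\tilde X_n:=Z^k_n$ otherwise.
\item Give $Z^k_n$ the normalized residual law of $\mu_n-(1-2^{-k})\sum_{i=1}^{j_k}\mu(E^k_i)\mu_n(\cdot\mid E^k_i)$, which is a nonnegative measure by the choice of $n_k$.
\end{enumerate}

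Then $\tilde{\mathcal L}_{\tilde X_n}=\mu_n$ exactly, and $d(\tilde X_n,\tilde X)<2^{-k}$ off an event of probability $<2^{1-k}$. Borel--Cantelli gives (2) without using completeness anywhere.

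If you only want Polish $E$, your first version of Step~3, $\tilde X_n(\omega):=\lim_k x^k_{i_k(\omega)}$, does work. That is all the paper needs, since the spaces $C\times C\times D$ with $J_1$ used in Section~3 are Polish. You must still actually prove the laws, e.g.\ via $X^k_n\to\tilde X_n$ uniformly and $\tilde{\mathcal L}_{X^k_n}=\sum_i\mu_n(E^k_i)\delta_{x^k_i}\Rightarrow\mu_n$. You must also replace $E^k_i$ by $\overline{E^k_i}$ in Step~4.
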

	
We now recall Vitali's convergence theorem, which is often used to prove $L^p$-convergence.
	
\begin{prop}[Vitali's convergence theorem]\cite[Proposition 1.1]{ISI}\label{lem.2.3}
Let $(E,\|\cdot\|)$ be a separable Banach space and let $p\in[1,\infty)$. Let $\{X_n\}_{n\geq1}$ and $X$ be $E$-valued random variables in $L^p$ such that $X_n$ converges to $X$ in probability or almost surely. Then the following statements are equivalent.
\begin{enumerate}
\item[$\rm(1)$] $\{X_n\}$ converges to $X$ in $L^p$.
\item[$\rm(2)$] $\{\|X_n\|^p\}$ is uniformly integrable.
\item[$\rm(3)$] $\lim\limits_{n \to \infty} \mathbb{E}[\|X_n\|^p] = \mathbb{E}[\|X\|^p]$.
\end{enumerate}
Furthermore, if either of the following two conditions holds, then $\rm (1)$--$\rm (3)$ are satisfied.
\begin{enumerate}
\item[$\rm(4)$] $\sup\limits_{n} \mathbb{E}[\|X_n\|^q] < \infty$ for some $q \in (p, \infty)$.
\item[$\rm(5)$] There exists a random variable $Y \in L^p$ such that $\|X_n\| \le Y$ for all $n$.
\end{enumerate}
\end{prop}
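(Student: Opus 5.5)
Since Proposition~\ref{lem.2.3} is quoted from \cite{ISI}, we only sketch how it can be proved directly. First we reduce to convergence in probability. Almost sure convergence implies convergence in probability, and $E$ is separable, so $\|X_n-X\|$ is a genuine random variable. Write $Y_n:=\|X_n\|^p$ and $Y:=\|X\|^p$. These are nonnegative, integrable, and $Y_n\to Y$ in probability, since the norm and $y\mapsto y^p$ are continuous. We then prove the cycle $(1)\Rightarrow(3)\Rightarrow(2)\Rightarrow(1)$, and afterwards check that (4) or (5) each imply (2).

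\emph{$(1)\Rightarrow(3)$.} This is Minkowski's inequality in $L^p(\Omega;E)$:
\[
\big|(\mathbb E\|X_n\|^p)^{1/p}-(\mathbb E\|X\|^p)^{1/p}\big|\le(\mathbb E\|X_n-X\|^p)^{1/p}\to0 .
\]

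\emph{$(2)\Rightarrow(1)$.} We use the pointwise bound
\[
\|X_n-X\|^p\le 2^{p-1}(Y_n+Y).
\]
It shows that $\{\|X_n-X\|^p\}$ is uniformly integrable, because a uniformly integrable family plus a single integrable variable stays uniformly integrable. Fix $\varepsilon>0$. Split $\mathbb E\|X_n-X\|^p$ over the events $\{\|X_n-X\|\le\varepsilon\}$ and its complement. The first part is at most $\varepsilon^p$. The second part tends to $0$, because the probability of the complement tends to $0$ and uniform integrability gives uniform absolute continuity of the integrals.

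\emph{$(3)\Rightarrow(2)$.} This is the step we expect to require the most care. Fix $K>0$. Since $y\mapsto y\wedge K$ is bounded and continuous, bounded convergence in probability gives $\mathbb E[Y_n\wedge K]\to\mathbb E[Y\wedge K]$. Combined with (3), this yields
\[
\mathbb E(Y_n-K)^+=\mathbb EY_n-\mathbb E[Y_n\wedge K]\longrightarrow\mathbb E(Y-K)^+ .
\]
Now use the elementary bound $Y_n\mathbf 1_{\{Y_n>2K\}}\le 2(Y_n-K)^+$. Given $\varepsilon>0$, pick $K$ with $\mathbb E(Y-K)^+<\varepsilon$. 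Then there is $n_0$ such that $\mathbb E[Y_n\mathbf 1_{\{Y_n>2K\}}]<4\varepsilon$ for all $n\ge n_0$. The finitely many indices $n<n_0$ are each integrable, so we may enlarge $K$ to cover them as well. This proves uniform integrability.

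\emph{Sufficient conditions.} Under (4), with $M:=\sup_n\mathbb E\|X_n\|^q$, we have
\[
\mathbb E[Y_n\mathbf 1_{\{Y_n>K\}}]\le K^{(p-q)/p}M\to0 \quad\text{as } K\to\infty, \text{ uniformly in } n,
\]
because $Y_n\le Y_n^{q/p}K^{(p-q)/p}$ on $\{Y_n>K\}$. Under (5), $Y_n\le Y^p$ with $Y^p$ integrable, and a family dominated by one integrable variable is uniformly integrable. In both cases (2) holds, and hence so do (1) and (3).
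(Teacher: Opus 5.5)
Your proof is correct and complete. The cycle $(1)\Rightarrow(3)\Rightarrow(2)\Rightarrow(1)$, together with the reduction of (4) and (5) to (2), is the standard argument, and each step checks out: Minkowski for $(1)\Rightarrow(3)$; the identity $y=(y-K)^++y\wedge K$ with $y\mathbf 1_{\{y>2K\}}\le 2(y-K)^+$ for $(3)\Rightarrow(2)$; and uniform absolute continuity for $(2)\Rightarrow(1)$.

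The paper gives no proof of this proposition; it only quotes it from the cited reference, so there is no argument of the paper's to compare with.

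The only blemish is notational. In (5) you reuse $Y$ for the dominating variable after setting $Y:=\|X\|^p$, so rename it, say $G$, and write $Y_n\le G^p$.
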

	
\section{Main result}
\setcounter{equation}{0}
We first establish several auxiliary lemmas and then prove the existence and uniqueness result under the combined Perron--Nagumo and distribution-dependent Lyapunov conditions.
\begin{lem}[Concatenation of absorbed one-step weak solutions]\label{lem:splicing}
Let $D\subset\mathbb R^d$ be open and $\overline{D}$ compact, let $0=s_0<s_1<\cdots<s_N=T$, and put $\varphi(t):=s_i$ for $t\in[s_i,s_{i+1})$.  Let $b$ and $\sigma$ be jointly Borel measurable. For a continuous path $\xi$ on $[0,T]$, write
\[
\tau_D(\xi):=\inf\{t\in[0,T]:\xi(t)\notin D\},
\]
where the infimum of the empty set is $\infty$. Suppose that, for every $i\in\{0,\cdots,N-1\}$, $y\in\overline D$, and every $\mu\in\mathcal P_2(\mathbb R^d)$ supported by $\overline D$, there is a weak solution $(Y^i,B^i)$ on $[s_i,s_{i+1}]$ of the absorbed frozen-law equation
\begin{equation}\label{one-step-fixed-law-sde}
Y^i(t)=y+\int_{s_i}^t\mathbf 1_{\{r<\tau^i_D(Y^i)\}}b\bigl(r,Y^i(r),\mu\bigr)dr+\int_{s_i}^t\mathbf 1_{\{r<\tau^i_D(Y^i)\}}\sigma\bigl(r,Y^i(r),\mu\bigr)\,dB^i(r),
\,\,t\in[s_i,s_{i+1}].
\end{equation}
The solution is absorbed after $\tau^i_D(Y^i):=\inf\{t\in[s_i,s_{i+1}]:Y^i(t)\notin D\}$ and has a finite second moment.
Then the joint laws of these stopped solutions admit a universally measurable selection in $(y,\mu)$.  Consequently, every initial law $\nu_0\in\mathcal P_2(\mathbb R^d)$ supported by $\overline D$ admits a concatenation $(X,B)$ on a stochastic basis $(\Omega,\mathcal{F},\{\mathcal{F}_t\},\mathbb{P})$ such that $B$ is a Brownian motion, $\mathcal L_{X(0)}=\nu_0$, and, with $\tau_D:=\inf\{t\in[0,T]:X(t)\notin D\}$, the process $X$ is absorbed after $\tau_D$ and satisfies
\begin{equation}\label{spliced-euler-equation}
X(t)=X(0)+\int_0^t\mathbf1_{\{r<\tau_D\}}b\bigl(r,X(r),\mathcal L_{X(\varphi(r))}\bigr)dr+\int_0^t\mathbf1_{\{r<\tau_D\}}\sigma\bigl(r,X(r),\mathcal L_{X(\varphi(r))}\bigr)dB(r).
\end{equation}
\end{lem}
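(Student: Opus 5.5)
The plan is to show that the stopped one-step solutions can be chosen measurably in the parameters $(y,\mu)$, and then to build $X$ forward in time, one interval $[s_i,s_{i+1}]$ at a time, by gluing kernels.

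\textbf{Measurable selection.} Fix $i$ and consider the parameter set
\[
\Theta_i:=\overline D\times\{\mu\in\mathcal P_2(\mathbb R^d):\mu(\overline D)=1\}.
\]
On $\mathcal P_2$-measures with support in the compact set $\overline D$, the weak and $\mathbb W_2$ topologies coincide, so $\Theta_i$ is a Polish (indeed compact) space. For each $\theta=(y,\mu)$, let $S_i(\theta)\subset\mathcal P(C([s_i,s_{i+1}];\mathbb R^d\times\mathbb R^m))$ be the set of joint laws of $(Y^i,B^i)$, with $B^i$ started at $0$ and $(Y^i,B^i)$ a weak solution of \eqref{one-step-fixed-law-sde}. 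By the martingale-problem characterization, a law $Q$ belongs to $S_i(\theta)$ exactly when:
\begin{itemize}
\item the coordinate $y$-path starts at $y$;
\item $B$ is a Brownian motion under $Q$ with respect to the filtration generated by both coordinates;
\item $\int_{s_i}^{s_{i+1}}\mathbf 1_{\{r<\tau\}}\bigl(\|b\|+\|\sigma\|^2\bigr)dr<\infty$ $Q$-a.s.;
\item the stochastic-integral identity holds $Q$-a.s.
\end{itemize}
Each condition can be written as the vanishing of countably many expressions $Q\mapsto\int F_k(\theta,\cdot)\,dQ$. Here the $F_k$ are Borel maps, built from the Borel coefficients $b,\sigma$, the hitting time $\tau_D$ (a Borel functional of the path), and a countable family of test functions. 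The It\^o-integral identity is expressed, as usual, via the martingale properties of $Y-y-\int\mathbf 1 b$, of its products with $B$, and of its quadratic variation against $\int\mathbf 1\sigma\sigma^\top$.

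Consequently, the graph $\{(\theta,Q):Q\in S_i(\theta)\}$ is a Borel subset of a Polish space. By hypothesis it has nonempty sections. The Jankov--von Neumann uniformization theorem then gives a universally measurable selector $\theta\mapsto Q_i(\theta)\in S_i(\theta)$. I expect this step to be the main obstacle: one must check that $\tau_D$ and the absorbed integrands are Borel on path space, and that "solution" is a Borel property of $(\theta,Q)$ even though the coefficients are merely measurable. Compactness of $\overline D$ handles the integrability issues, since second moments are automatic once the process is absorbed in $\overline D$.

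\textbf{Concatenation.} Start with $\nu_0$ and proceed by induction on $i$. Suppose a law $P_i$ has been built for $(X,B)$ on $[0,s_i]$, with $X$ absorbed in $\overline D$, so that $\mu_i:=\mathcal L_{X(s_i)}$ is supported by $\overline D$. Define $P_{i+1}$ on paths over $[0,s_{i+1}]$ by gluing $P_i$ with the kernel
\[
\omega\mapsto Q_i\bigl(X(s_i)(\omega),\mu_i\bigr),
\]
shifting the new Brownian increment so that it starts at $B(s_i)$. The kernel is universally measurable in $\omega$, so the composition is a well-defined probability measure, after completing $P_i$.

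A special case needs care. If $X(s_i)\in\partial D$, or if $\tau_D$ has already occurred, then $\tau^i_D=s_i$ and the new segment stays constant. This agrees with absorption of $X$ after $\tau_D$: once the path has left $D$, it is frozen forever.

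\textbf{Verifying the glued process.} Under the concatenated law, $B$ is a Brownian motion for the joint filtration. The reason is that, conditionally on $\mathcal F_{s_i}$, the increment $B(\cdot)-B(s_i)$ is a Brownian motion independent of $\mathcal F_{s_i}$, and this conditional law does not depend on the conditioning.

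The stochastic integrals add up across the intervals. On $[s_i,s_{i+1})$ we have $\varphi(r)=s_i$, so the frozen law in the coefficients is exactly $\mathcal L_{X(\varphi(r))}=\mu_i$. Moreover $\tau^i_D$ coincides with $\tau_D$ on $\{\tau_D\geq s_i\}$, and both indicators vanish otherwise. Together these give \eqref{spliced-euler-equation}.

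Finally, realize the law $P_N$ on its canonical path space, completed and with the usual augmentation of the filtration. This produces the required stochastic basis $(\Omega,\mathcal F,\{\mathcal F_t\},\mathbb P)$ and the pair $(X,B)$.
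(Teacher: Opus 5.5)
Your proposal is correct and follows essentially the same route as the paper. You obtain a Jankov--von Neumann selection of the one-step laws from the set of parameter--law pairs cut out by countably many martingale identities (you argue this set is Borel, the paper only claims analytic, and either suffices), then glue the universally measurable kernels $Q_i(\cdot,\mu_i)$ successively, which is what the paper's appeal to Ionescu--Tulcea amounts to; the Brownian property via conditional independence of increments and the identification $\mathcal L_{X(\varphi(r))}=\mu_i$ on each interval match the paper, and your check that $\mathbf 1_{\{r<\tau^i_D\}}$ and $\mathbf 1_{\{r<\tau_D\}}$ agree on $[s_i,s_{i+1}]$ makes explicit a step the paper leaves implicit.
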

	
\begin{proof}
For a fixed $i$, consider the set of triples $(y,\mu,Q)$ for which $Q$ is the joint law of a pair $(Y^i,B^i)$ solving \eqref{one-step-fixed-law-sde}. The martingale identities tested at rational times against a countable determining class show that this set is analytic.  Its projection onto $(y,\mu)$ is the whole parameter set by the assumed one-step weak existence. The Jankov--von Neumann selection theorem therefore yields a universally measurable kernel $Q_i(y,\mu)$; see the standard martingale-problem selection argument in \cite[Chapter~12]{STR} and the descriptive-set-theoretic selection theorem in \cite[Chapter~18]{Kechris}. The stopping map on continuous paths is Borel, so this is also a universally measurable kernel for the stopped pairs.
		
Set $\mu_0:=\nu_0$.  Given $\mu_i$, use $Q_i(y,\mu_i)$ and define
\[
\mu_{i+1}(A):=\int_{\overline D}Q_i(y,\mu_i)\bigl(Y(s_{i+1})\in A\bigr)\,\mu_i(dy).
\]
Absorption gives $\mu_{i+1}(\overline D)=1$, and the moment assumption gives $\mu_{i+1}\in\mathcal P_2(\mathbb R^d)$. The Ionescu--Tulcea theorem \cite{IonescuTulcea}, applied to the universally measurable kernels $Q_0,\ldots,Q_{N-1}$ after the usual completion, produces the concatenated state process and the concatenated Brownian increments. Conditionally on the past at $s_i$, the increment on $[s_i,s_{i+1}]$ is a Wiener increment independent of that past; successive conditioning therefore proves that the concatenated noise is a Brownian motion. Since $\mathcal L_{X(\varphi(r))}=\mu_i$ on the $i$-th interval, summing the stopped one-step equations gives
\eqref{spliced-euler-equation}.
\end{proof}
The following lemma retains the notation of Lemma~\ref{lem:splicing}.
\begin{lem}[Absorbed Euler limit]\label{lem:absorbed-euler-limit}
Let $D\subset\mathbb R^d$ be open with compact closure. Suppose that $b$ and $\sigma$ are bounded on $[0,T]\times\overline D\times\mathcal{M}$ and continuous in $(x,\mu)$ there, where $\mathcal{M}$ is a compact set of probability measures supported by $\overline D$. Let the mesh sizes of a sequence of partitions converge to zero. Let $(X^n,B^n)$ be the absorbed Euler concatenations of Lemma~{\rm \ref{lem:splicing}}, with $\mathcal L_{X^n(0)}=\nu_0$ and with all their marginal laws in $\mathcal{M}$. Then there exist a subsequence, a stochastic basis $(\Omega,\mathcal{F},\{\mathcal{F}_t\},\mathbb{P})$, an $m$-dimensional Brownian motion $B$, a continuous adapted process $X$, and a stopping time $\theta$ with values in $[0,T]\cup\{\infty\}$ such that
\begin{equation}\label{absorbed-limit-sde}
X(t)=X(0)+\int_0^t\mathbf1_{\{r<\theta\}}b\bigl(r,X(r),\mathcal{L}_{X(r)}\bigr)dr+\int_0^t\mathbf1_{\{r<\theta\}}\sigma\bigl(r,X(r),\mathcal{L}_{X(r)}\bigr)dB(r),\,\,0\leq t\leq T,
\end{equation}
and
\begin{equation}\label{delayed-stopped-path}
X(t)=X(t\wedge\theta),\,\,
X(\theta)\in\partial D\quad\hbox{on }\{\theta\leq T\}.
\end{equation}
In general, $\theta$ need not be the first exit time of $X$ from $D$.
\end{lem}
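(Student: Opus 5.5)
The argument is a tightness–Skorokhod–identification scheme. I add the exit time of $X^n$ as an extra coordinate, so that the absorbing time of the limit can be a limit point of exit times rather than the exit time of the limit path.

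\emph{Step 1: tightness.} Put $\tau^n:=\tau_D(X^n)\in[0,T]\cup\{\infty\}$, where $[0,T]\cup\{\infty\}$ is compactified as a two-point-separated compact space. Consider the triple $(X^n,B^n,\tau^n)$ in $\mathcal C_T\times C([0,T];\mathbb R^m)\times([0,T]\cup\{\infty\})$. Since $b,\sigma$ are bounded on $[0,T]\times\overline D\times\mathcal M$, and $X^n$ stays in $\overline D$ with marginal laws in $\mathcal M$, the drift and diffusion in \eqref{spliced-euler-equation} are uniformly bounded. The BDG inequality then gives $\mathbb E\|X^n(t)-X^n(s)\|^4\le C|t-s|^2$, uniformly in $n$. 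Together with the fixed initial law $\nu_0$, Kolmogorov's criterion yields tightness of $\mathcal L_{X^n}$. The laws of $B^n$ are all Wiener measure, and the third coordinate lives in a compact space, so the triple is tight. Pass to a subsequence converging weakly, and use Proposition~\ref{Skorokhod} to realize it on one space with $(\tilde X^n,\tilde B^n,\tilde\tau^n)\to(X,B,\theta)$ almost surely. Take the filtration generated by $(X,B,\mathbf 1_{\{\theta\le s\}})_{s\le t}$, augmented. Then $\theta$ is a stopping time, and $B$ is a Brownian motion for it. The latter follows because $B^n(t)-B^n(s)$ is independent of the past of $(X^n,B^n,\tau^n)$ up to $s$ (the concatenated noise is a Brownian motion for its filtration by Lemma~\ref{lem:splicing}), and this independence passes to the limit via bounded continuous test functions.

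\emph{Step 2: stopped structure.} Since $X^n(t)=X^n(t\wedge\tau^n)$ and $X^n(\tau^n)\in\partial D$ on $\{\tau^n\le T\}$, I pass to the limit. Continuity of $X$ and uniform convergence give $X(t)=X(t\wedge\theta)$ and $X(\theta)\in\partial D$ on $\{\theta\le T\}$, using the closedness of $\partial D$; this is \eqref{delayed-stopped-path}. Here $\theta$ may be strictly larger than the first exit time of $X$, since $X$ can touch $\partial D$ without the approximations exiting, which explains the final remark. The marginals $\mathcal L_{X^n(t)}\to\mathcal L_{X(t)}$ weakly, and, since all these laws are supported in the compact set $\overline D$, also in $\mathbb W_2$. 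Moreover $\mathcal L_{X^n(\varphi_n(t))}\to\mathcal L_{X(t)}$ because the mesh tends to $0$ and the paths are equicontinuous in probability. Compactness of $\mathcal M$ keeps every limit law in $\mathcal M$.

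\emph{Step 3: identification.} This is the main obstacle, for two reasons: $b,\sigma$ are only measurable in $t$, and the indicator $\mathbf 1_{\{r<\tau^n\}}$ is discontinuous. I would use the martingale-problem formulation. For $f\in C^2_c$, the process
\[
f(X^n(t))-\int_0^t\mathbf 1_{\{r<\tau^n\}}\mathcal A^n_r f(X^n(r))dr
\]
is a martingale, and so are the analogous processes built from $X^nB^{n,\top}$ and the cross-variation terms. Here $\mathcal A^n_r$ is the generator with coefficients evaluated at frozen law $\mathcal L_{X^n(\varphi_n(r))}$.

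To pass to the limit in the time integrals, I use the uniform continuity of $b(r,\cdot,\cdot),\sigma(r,\cdot,\cdot)$ on the compact set $\overline D\times\mathcal M$ for each fixed $r$. Dominated convergence in $r$ then gives
\[
b\big(r,X^n(r),\mathcal L_{X^n(\varphi_n(r))}\big)\to b\big(r,X(r),\mathcal L_{X(r)}\big)
\]
for a.e. $r$, and likewise for $\sigma$. No continuity in $t$ is needed, since the time variable is the same on both sides. For the indicator, $\mathbf 1_{\{r<\tilde\tau^n\}}\to\mathbf 1_{\{r<\theta\}}$ for every $r\ne\theta$, hence for a.e. $r$ almost surely. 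Boundedness then gives convergence of the integrals, and the martingale property passes to the limit through bounded continuous functionals of the past. This identifies
\[
M(t):=X(t)-X(0)-\int_0^t\mathbf 1_{\{r<\theta\}}b\,dr
\]
as a continuous martingale with quadratic variation $\int_0^t\mathbf 1_{\{r<\theta\}}\sigma\sigma^\top dr$ and cross-variation $\int_0^t\mathbf 1_{\{r<\theta\}}\sigma\,dr$ with $B$. Expanding the quadratic variation of $M-\int\mathbf 1_{\{r<\theta\}}\sigma\,dB$ then shows it vanishes, which yields \eqref{absorbed-limit-sde}.
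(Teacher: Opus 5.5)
Your proposal follows the paper's proof: you carry the exit time along as an extra coordinate through tightness and Skorokhod representation, identify the limit via the martingale and (co)variation identities, use the zero-quadratic-variation argument for the stochastic integral, and pass to the limit in the stopped structure. The only difference is cosmetic: you record $\tau^n$ as a point of the compact space $[0,T]\cup\{\infty\}$, with $\infty$ isolated, rather than as the indicator path $a_{\widehat\theta^n}$ in $D([0,T+2];\{0,1\})$, and the two encodings are equivalent since $q\mapsto a_q$ is a homeomorphism onto $\mathscr A$. One small point to add: test the martingale identities (and the independence of Brownian increments) only at times that are not atoms of $\theta$, as the paper does with its set $\mathbb T_0$, so that the discontinuous functional $\mathbf 1_{\{\theta\le s\}}$ of the past passes to the limit.
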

	
\begin{proof}
For the $n$-th partition $0=s_0^n<\cdots<s_{N_n}^n=T$ with $\lim\limits_{n\rightarrow\infty}\max\limits_{0\leq i\leq N_n-1}|s^n_{i+1}-s^n_i|=0$, let
$\varphi_n(t):=s_i^n$ for $t\in[s_i^n,s_{i+1}^n)$, $i\in\{0,\dots,N_n-1\}$ and $\varphi_n(T)=T$. Put
\[
\tau^n:=\inf\{t\in[0,T]:X^n(t)\notin D\},\,\,
A^n(t):=\mathbf1_{\{t<\tau^n\}},
\]
with the convention $\inf\varnothing=\infty$.

Define 
\[
\widehat\theta^n:=
\begin{cases}
1+\tau^n,&\tau^n\leq T,\\
T+2,&\tau^n=\infty,
\end{cases}
\,\,\widehat A^n(s):=\mathbf1_{\{s<\widehat\theta^n\}},\,\, 0\leq s\leq T+2.
\]
We have
\begin{equation}\label{extended-activity-restriction}
\widehat A^n(t+1)=A^n(t),\,\, 0\leq t\leq T.
\end{equation}
For $q\in[0,T+2]$, write $a_q(s):=\mathbf1_{\{s<q\}}$ and set
\[
\mathscr A:=\{a_q:q\in[1,T+1]\}\cup\{a_{T+2}\}
\subset D([0,T+2];\{0,1\}),
\]
where $D([0,T+2];\{0,1\})$ denotes the space of
$\{0,1\}$-valued c\`adl\`ag functions on $[0,T+2]$, equipped
with the Skorokhod $J_1$ topology (see
\cite[Section~12]{CPM}). $\mathscr{A}$ is a compact set in the $J_1$ topology. Indeed, if $q_n\to q$ in $[1,T+1]$, the piecewise linear increasing homeomorphism which maps $0$ to $0$, $q$ to $q_n$, and $T+2$ to $T+2$ shows that $a_{q_n}\to a_q$ in $J_1$. The image of $[1,T+1]$ is therefore compact, and $a_{T+2}$ is an additional singleton. Since $\widehat A^n\in\mathscr A$ almost surely, the indicator processes $\widehat{A}^n$ are tight. For $1\leq s\leq T+1$, put
\[
\widehat X^n(s):=X^n(s-1),\,\,
\widehat B^n(s):=B^n(s-1),\,\,
\widehat\varphi_n(s):=1+\varphi_n(s-1).
\]
Thus $\widehat\varphi_n(s)$ is defined on the whole interval $[1,T+1]$. With respect to the shifted filtration $\widehat{\mathcal F}^{\,n}_s:=\mathcal F^n_{s-1}$, $\widehat B^n$ is a Brownian motion on $[1,T+1]$, and $\widehat A^n(s)$ is $\widehat{\mathcal F}^{\,n}_s$-measurable. Boundedness gives the usual uniform moment and modulus estimates for $(\widehat X^n,\widehat B^n)$. Hence the laws of $(\widehat X^n,\widehat B^n,\widehat A^n)$ are tight on
\[
C([1,T+1];\mathbb R^d)\times C([1,T+1];\mathbb R^m)\times D([0,T+2];\{0,1\}),
\]
where the last factor is equipped with the $J_1$ topology. By Prokhorov's theorem and the Skorokhod representation theorem, after passing to a subsequence which is not relabelled, there exist a probability space \((\widetilde{\Omega},\widetilde{\mathcal F},
\widetilde{\mathbb P})\) and random variables $(\widetilde X^n,\widetilde B^n,\widetilde A^n)$ and $ (\widetilde X,\widetilde B,\widetilde A)$ taking values in
\[
C([1,T+1];\mathbb R^d)\times C([1,T+1];\mathbb R^m)\times D([0,T+2];\{0,1\}),
\]
such that $\mathcal L_{\widetilde{\mathbb P}} (\widetilde X^n,\widetilde B^n,\widetilde A^n)=
\mathcal L (\widehat X^n,\widehat B^n,\widehat A^n)$ for every $n$, and
\[
(\widetilde X^n,\widetilde B^n,\widetilde A^n)
\rightarrow
(\widetilde X,\widetilde B,\widetilde A),\,\, \widetilde{\mathbb P}\text{-a.s.}
\]
The convergence is uniform in the first two coordinates and is in the \(J_1\) topology in the third coordinate. 

Since \(\mathscr A\) is closed, $\widetilde A=a_{\widehat\theta}$ for a unique \(\widehat\theta\in[1,T+1]\cup\{T+2\}\). Define
\[
\theta:=
\begin{cases}
\widehat\theta-1,&\widehat\theta\leq T+1,\\
\infty,&\widehat\theta=T+2.
\end{cases}
\]
Moreover, for every \(n\), there is a unique
\(\widetilde q^n\in[1,T+1]\cup\{T+2\}\) such that \(\widetilde A^n=a_{\widetilde q^n}\). Put
\[
\widetilde\tau^n:=
\begin{cases}
\widetilde q^n-1,&\widetilde q^n\leq T+1,\\
\infty,&\widetilde q^n=T+2.
\end{cases}
\]
Then
\[
\widetilde A^n(t+1)=\mathbf1_{\{t<\widetilde\tau^n\}},
\,\,\widetilde A(t+1)=\mathbf1_{\{t<\theta\}},\,\, 0\leq t\leq T.
\]
Since \(q\mapsto a_q\) is a homeomorphism from \([1,T+1]\) onto its image and \(a_{T+2}\) is isolated from that image, we have
\[
\widetilde q^n\rightarrow\widehat\theta,
\,\,\int_0^T\left|\mathbf1_{\{t<\widetilde\tau^n\}}-\mathbf1_{\{t<\theta\}}\right|dt\rightarrow0,
\,\,\widetilde{\mathbb P}\text{-a.s.}
\]
Indeed, if $\widehat\theta\leq T+1$, then eventually
$\widetilde q^n\leq T+1$ and
$\widetilde\tau^n=\widetilde q^n-1\to
\widehat\theta-1=\theta$. Hence
\[
\int_0^T
\left|
\mathbf1_{\{t<\widetilde\tau^n\}}
-\mathbf1_{\{t<\theta\}}
\right|dt
=
|\widetilde\tau^n-\theta|
\rightarrow0.
\]
If $\widehat\theta=T+2$, the isolation of $a_{T+2}$ implies that $\widetilde q^n=T+2$ for all sufficiently large $n$, so both indicators are equal to one on $[0,T]$. Let $\mu^n(s):=\mathcal L_{\widetilde{\mathbb P}}(\widetilde X^n(s)),\,\,\nu^n(s):=\mu^n(\widehat\varphi_n(s)),\,\,1\leq s\leq T+1$. We have
\[
\sup_{1\leq s\leq T+1}\mathbb W_2\bigl(\mu^n(s),\nu^n(s)\bigr)\rightarrow0.
\]
Furthermore, the almost sure uniform convergence of the state coordinates and compact support imply
\[
\sup_{1\leq s\leq T+1}\mathbb W_2\bigl(\mu^n(s),\mu(s)\bigr)\rightarrow0,\,\,
\mu(s):=\mathcal L_{\widetilde{\mathbb P}}(\widetilde X(s)).
\]
Since $\mathcal M$ is compact and hence closed in the $\mathbb W_2$-topology, and $\mu^n(s)\in\mathcal M$ for every $n$ and $s\in[1,1+T]$, it follows that $\mu(s)\in\mathcal M$.

We define, for \(1\leq s\leq T+1\),
\[
\begin{aligned}
	\widetilde M^n(s)&:=\widetilde X^n(s)-\widetilde X^n(1)-\int_1^s\widetilde A^n(r)b\bigl(r-1,\widetilde X^n(r),\nu^n(r)\bigr)dr,\\
	Q_{ij}^n(s)
	&:=
	\int_1^s
	\widetilde A^n(r)
	(\sigma\sigma^\top)_{ij}
	\bigl(r-1,\widetilde X^n(r),\nu^n(r)\bigr)\,dr,\\
	C_{iq}^n(s)
	&:=
	\int_1^s
	\widetilde A^n(r)
	\sigma_{iq}
	\bigl(r-1,\widetilde X^n(r),\nu^n(r)\bigr)\,dr.
\end{aligned}
\]
Since the canonical martingale identities are determined by the joint law, each of the processes
\[
\widetilde M_i^n,\,\,\widetilde B_p^n,\,\,
\widetilde M_i^n\widetilde M_j^n-Q_{ij}^n,
\,\,
\widetilde M_i^n\widetilde B_q^n-C_{iq}^n,
\,\,
\widetilde B_p^n\widetilde B_q^n
-\delta_{pq}(\cdot-1),
\]
is a martingale in the usual augmentation of the canonical filtration generated by $(\widetilde X^n,\widetilde B^n,\widetilde A^n)$.

Choose a countable dense set
$\mathbb T_0\subset(1,T+1]$ such that $\widetilde{\mathbb P}(\widehat\theta=t)=0$ for $t\in\mathbb T_0$.
The uniform convergence of
$(\widetilde X^n,\widetilde B^n)$, the
$L^1$-convergence of $\widetilde{A}^n$, and the boundedness and continuity of the coefficients imply that $\widetilde M^n$, $Q^n$, and $C^n$ converge to their corresponding limiting processes at all times in $\mathbb T_0$, almost surely. Moreover, $\widetilde M^n$ is uniformly bounded, while $\widetilde B^n$ has uniformly bounded moments of every finite order. Hence the random variables appearing in the martingale identities are uniformly integrable. Testing these identities against bounded continuous cylinder functions of the past and passing to the limit gives the corresponding limiting martingale identities on $\mathbb T_0$. A monotone-class argument and right-continuity extend them to the usual augmentation of the
canonical filtration of $(\widetilde X,\widetilde B,\widetilde A)$. In particular, each $\widetilde B_p$ is a continuous martingale and $\widetilde B_p\widetilde B_q-\delta_{pq}(\cdot-1)$ is a martingale. Therefore, \(\widetilde B\) is a Brownian motion with respect to this filtration and
\[
\widetilde M(s):=\widetilde X(s)-\widetilde X(1)-\int_1^s\mathbf1_{\{r<\widehat\theta\}}
b\bigl(r-1,\widetilde X(r),\mu(r)\bigr)\,dr
\]
is a continuous local martingale satisfying
\[
\begin{aligned}
\langle\widetilde M\rangle_s&=\int_1^s\mathbf1_{\{r<\widehat\theta\}}\sigma\sigma^\top\bigl(r-1,\widetilde X(r),\mu(r)\bigr)\,dr,\\
\langle\widetilde M,\widetilde B\rangle_s
&=\int_1^s\mathbf1_{\{r<\widehat\theta\}}
\sigma\bigl(r-1,\widetilde X(r),\mu(r)\bigr)\,dr.
\end{aligned}
\]
Put
\[
H(r):=\mathbf1_{\{r<\widehat\theta\}}\sigma\bigl(r-1,\widetilde X(r),\mu(r)\bigr)
\]
and
\[
N(s):=\widetilde M(s)-\int_1^sH(r)\,d\widetilde B(r).
\]
The preceding quadratic-variation and covariation identities imply
that \(N\) is a continuous local martingale with
\(\langle N\rangle\equiv0\). Hence \(N\equiv0\), and therefore
\[
\widetilde M(s)=\int_1^s\mathbf1_{\{r<\widehat\theta\}}\sigma\bigl(r-1,\widetilde X(r),\mu(r)\bigr)
\,d\widetilde B(r).
\]

Now let $\widehat{\mathcal F}_s$ be the usual augmentation of the canonical filtration generated by $(\widetilde X,\widetilde B,\widetilde A)$ and define
\[
X(t):=\widetilde X(t+1),\,\,
B(t):=\widetilde B(t+1)-\widetilde B(1),\,\,
\mathcal F_t:=\widehat{\mathcal F}_{t+1}.
\]
Then $B$ is a Brownian motion with respect to $\{\mathcal F_t\}$ and $\{\theta\leq t\}=\{\widetilde A(t+1)=0\}\in\mathcal F_t$. Changing variables in the preceding martingale representation proves
\eqref{absorbed-limit-sde}. The convergence also gives $\mathcal L_{X(0)}=\nu_0$.
		
Finally, for every $n\geq1$, we have
\[
\widetilde X^n(t+1)=\widetilde X^n\bigl(1+t\wedge\widetilde\tau^n\bigr),
\,\,0\leq t\leq T,\,\,\widetilde{\mathbb{P}}\text{-a.s.}
\]
and
\[
\widetilde X^n(1+\widetilde\tau^n)\in\partial D
\,\,\text{on }\{\widetilde\tau^n\leq T\}.
\]
We note that $\widetilde X^n\to\widetilde X$ uniformly almost surely on $[1,T+1]$ and $\widetilde\tau^n\to\theta$ almost surely on $\{\theta\leq T\}$. For a fixed sample path and $\delta>0$, define
\[
\omega_{\widetilde X}(\delta)
:=
\sup_{\substack{r,s\in[1,T+1]\\ |r-s|\leq\delta}}
\|\widetilde X(r)-\widetilde X(s)\|_{\mathbb{R}^d}.
\]
Since $\widetilde X$ is continuous on $[1,T+1]$, we have
$\omega_{\widetilde X}(\delta)\to0$ as $\delta\downarrow0$.
Consequently,
\[
    \sup_{0\leq t\leq T}
	\big\|
	\widetilde X(t+1)-\widetilde X(1+t\wedge\theta)
	\big\|_{\mathbb{R}^d}\leq
	2\sup\limits_{t\in[1,1+T]}\|\widetilde X^n(t)-\widetilde X(t)\|_{\mathbb{R}^d}
	+
	\omega_{\widetilde X}
	\bigl(|\widetilde\tau^n-\theta|\bigr)
	\rightarrow0
\]
almost surely on $\{\theta\leq T\}$. On $\{\theta=\infty\}$, the identity is immediate. Hence
\[
X(t)=X(t\wedge\theta),
\,\,0\leq t\leq T,\,\,\widetilde{\mathbb{P}}\text{-a.s.}
\]

Moreover, on $\{\theta\leq T\}$, we have
\[
	\big\|
	\widetilde X^n(1+\widetilde\tau^n)-
	\widetilde X(1+\theta)
	\big\|_{\mathbb{R}^d}\leq
	\sup\limits_{t\in[1,T+1]}\|\widetilde X^n(t)-\widetilde X(t)\|_{\mathbb{R}^d}+
	\omega_{\widetilde X}
	\bigl(|\widetilde\tau^n-\theta|\bigr)
	\rightarrow0.
\]
Since
$\widetilde X^n(1+\widetilde\tau^n)\in\partial D$
eventually and $\partial D$ is closed, we obtain
\[
X(\theta)=\widetilde X(1+\theta)\in\partial D
\,\,\text{on }\{\theta\leq T\}.
\]
This proves \eqref{delayed-stopped-path}.
\end{proof}

The following lemma is a restricted version of the Yamada--Watanabe principle in the compatibility framework of Kurtz~\cite[Theorem~1.5]{Kurtz}, specialized to the Lyapunov-admissible class considered here.
\begin{lem}[Restricted Yamada--Watanabe theorem]
\label{lem:yw-lyapunov-class}
Fix $\mu_0\in\mathcal P_2(\mathbb R^d)$ and define $\mathfrak K_{\mathcal V}\subset\mathcal P(\mathcal C_T)$ by
\[
\mathfrak K_{\mathcal V}
:=\big\{\nu:\nu\circ e_0^{-1}=\mu_0,\,\,
\nu\circ e_t^{-1}\in\mathcal P_2(\mathbb R^d)\text{ for every }t\in[0,T],\,\,
\sup_{t\in[0,T]}\mathcal V(t,\nu\circ e_t^{-1})<\infty\big\},
\]
where $e_t(w):=w(t)$ for $w\in\mathcal{C}_T$.
		
For any weak solution $(X,B)$ of \eqref{2.1} in the sense of Definition~$\rm\ref{de.2.1}$, defined on a filtered probability space $(\Omega,\mathcal F,\{\mathcal F_t\},\mathbb P)$, set
\[
Y:=\bigl(X(0),B|_{[0,T]}\bigr),\,\,Y_t:=\bigl(X(0),B|_{[0,t]}\bigr),\,\,t\in[0,T].
\]
Such a weak solution $(X,B)$ is called compatible if $X$ is continuous and adapted and, for every $t\in[0,T]$ and every bounded Borel function
\[
h:C([0,t];\mathbb R^d)\rightarrow\mathbb R,
\]
one has
\[
\mathbb E\left[
h\bigl(X|_{[0,t]}\bigr)|\sigma(Y)\right]
=
\mathbb E\left[h\bigl(X|_{[0,t]}\bigr)|\sigma(Y_t)\right]\,\,\text{a.s.}
\]
A compatible weak solution $(X,B)$ is called $\mathfrak K_{\mathcal V}$-admissible if $\mathcal L_X\in\mathfrak K_{\mathcal V}$. Assume that
\begin{enumerate}
\item[$\rm(1)$] There exists a compatible $\mathfrak K_{\mathcal V}$-admissible weak solution of \eqref{2.1}.
\item[$\rm(2)$] Pathwise uniqueness in the sense of Definition~$\rm\ref{de.2.1}$ holds within the class of $\mathfrak K_{\mathcal V}$-admissible strong solutions.
\end{enumerate}
		
Then there exists a Borel measurable non-anticipative map
\[
\Phi:\mathbb R^d\times C([0,T];\mathbb R^m)\rightarrow C([0,T];\mathbb R^d)
\]
such that, for every $t\in[0,T]$, there is a Borel map
\[
\Phi_t:\mathbb R^d\times C([0,t];\mathbb R^m)\rightarrow C([0,t];\mathbb R^d)
\]
satisfying
\[
\Phi(x,w)|_{[0,t]}=\Phi_t\bigl(x,w|_{[0,t]}\bigr).
\]
		
		Moreover, on every filtered probability space carrying an
		$\mathbb R^m$-valued Brownian motion $B$ and an
		$\mathcal F_0$-measurable random variable $X_0$ with
		$\mathcal L_{X_0}=\mu_0$, the process
		$X=\Phi(X_0,B)$ is the unique $\mathfrak K_{\mathcal V}$-admissible strong solution of \eqref{2.1} in the sense of
		Definition~$\rm\ref{de.2.1}$.
	\end{lem}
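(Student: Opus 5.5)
The plan is to reduce the McKean--Vlasov problem to a classical SDE with a frozen, deterministic flow of marginals, and then apply Kurtz's compatibility version of the Yamada--Watanabe theorem to that classical equation. Let $(X,B)$ be the compatible $\mathfrak K_{\mathcal V}$-admissible weak solution given by assumption (1), and set $\mu_t:=\mathcal L_{X(t)}$, $t\in[0,T]$. Define the frozen coefficients $\bar b(t,x):=b(t,x,\mu_t)$ and $\bar\sigma(t,x):=\sigma(t,x,\mu_t)$. These are jointly Borel, because $t\mapsto\mu_t$ is continuous in the weak topology: $X$ has continuous paths, and the second moments are controlled. Thus $(X,B)$ is a compatible weak solution of the classical SDE $dZ=\bar b(t,Z)dt+\bar\sigma(t,Z)dB$ with $Z(0)\sim\mu_0$. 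I would consider the class $\mathcal S$ of compatible weak solutions $Z$ of this frozen SDE whose law $\mathcal L_Z$ has marginals $\mathcal L_{Z(t)}=\mu_t$ for all $t$. Such a $Z$ is then automatically a solution of \eqref{2.1} and is $\mathfrak K_{\mathcal V}$-admissible.

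The core step is to establish pathwise uniqueness for the frozen SDE within $\mathcal S$. Let $Z^1,Z^2\in\mathcal S$ be defined on a common basis with the same $B$ and $X_0$. Each $Z^i$ satisfies \eqref{2.1}, since its law at time $t$ is $\mu_t$. If they are strong solutions, assumption (2) gives $Z^1=Z^2$ a.s. In Kurtz's framework, pathwise uniqueness must also be checked for compatible weak solutions that are not necessarily adapted to $(X_0,B)$. I would handle this with the standard device: for any two jointly compatible solutions, Kurtz's Theorem~1.5 in \cite{Kurtz} constructs the coupling through the regular conditional law given $Y=(X_0,B)$. Pathwise uniqueness restricted to the class with fixed marginal flow then forces the conditional law to be a Dirac mass. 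To apply assumption (2) to these solutions, I would read Definition~\ref{de.2.1} in the compatible sense, in which solutions are required only to be adapted to a filtration for which $B$ is Brownian. Making this interpretation explicit is the delicate point.

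Once this is done, Kurtz's theorem yields a Borel map $\Phi$ with $X=\Phi(X_0,B)$ a.s. Compatibility gives non-anticipativity: the conditional law of $X|_{[0,t]}$ given $Y$ equals its conditional law given $Y_t$, and since that law is a Dirac mass, $X|_{[0,t]}$ is a.s. a Borel function $\Phi_t$ of $Y_t$. I would then modify $\Phi$ on a $\mu_0\otimes$Wiener null set, using rational times and continuity of paths, so that $\Phi(x,w)|_{[0,t]}=\Phi_t(x,w|_{[0,t]})$ holds identically.

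Finally, given any basis carrying $B$ and $X_0\sim\mu_0$, I would set $X':=\Phi(X_0,B)$. The joint law of $(X_0,B,X')$ coincides with that of the original solution, so $X'$ is adapted, solves the frozen SDE, and has marginals $\mu_t$. Hence $X'$ solves \eqref{2.1}, lies in $\mathfrak K_{\mathcal V}$, and is strong. Uniqueness follows from assumption (2), since any other admissible strong solution coincides with $X'$ pathwise.

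The main obstacle is the mismatch between the two notions of uniqueness. Assumption (2) covers only solutions of the nonlinear equation within $\mathfrak K_{\mathcal V}$, whereas the frozen SDE could a priori have solutions with different marginal flows. Restricting to $\mathcal S$, that is, fixing the marginals, resolves this. Kurtz's theorem accommodates such a convex constraint on the joint law, because the constraint is preserved under the conditional-product coupling used in his proof.
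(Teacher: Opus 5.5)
Your proposal is correct and follows essentially the same route as the paper. Both arguments use a conditional-product coupling over $Y=(X(0),B)$ in Kurtz's compatibility framework, read Definition~\ref{de.2.1} so that adaptedness to a common filtration for which $B$ is Brownian suffices (exactly how the paper concludes that each $X_i$ is a strong solution), and then use Doob--Dynkin for non-anticipativity and transfer the joint law to an arbitrary basis; your frozen-flow class $\mathcal S$ just makes explicit what the paper uses implicitly when it notes $\mathcal L_{\overline{\mathbb P}}(X_i,Y)=\mathcal L_{\mathbb P}(X,Y)$, so each copy keeps the marginal flow $(\mu_t)$ and hence solves \eqref{2.1} and is $\mathfrak K_{\mathcal V}$-admissible.
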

	\begin{proof}
		Let $(X,B)$ be a compatible
		$\mathfrak K_{\mathcal{V}}$-admissible weak solution. Since the path spaces are Polish, there exists a regular conditional
		distribution
		$\Gamma(y,\cdot):=\mathbb P\bigl(X\in\cdot|Y=y\bigr)$. On the conditional-product space with probability measure
		\[
		\overline{\mathbb P}(dx_1,dx_2,dy)
		:=
		\Gamma(y,dx_1)\Gamma(y,dx_2)\mathcal L_{Y}(dy),
		\]
		let $X_1$ and $X_2$ be the first two coordinate processes of the conditional-product space. By the argument of Kurtz~\cite[Lemmas~2.11--2.12]{Kurtz},
		$(X_1,X_2)$ is jointly compatible with $Y$. For every $t\in[0,T]$, the future Brownian increments are independent of the augmentation of $\sigma\bigl(X_1(r),X_2(r),B(r):0\leq r\leq t\bigr)$. Thus $B$ is a Brownian motion with respect to the filtration generated by $(X_1,X_2,B)$. Moreover, we have
		$\mathcal L_{\overline{\mathbb P}}(X_1,X_2,Y)(dx_1,dx_2,dy)=
		\Gamma(y,dx_1)\Gamma(y,dx_2)\mathcal L_Y(dy)$. For $i=1,2$, the $(X_i,Y)$-marginal yields
		\[
		\mathcal L_{\overline{\mathbb P}}(X_i,Y)
		=\mathcal L_{\mathbb P}(X,Y).
		\]
		Hence each $(X_i,B)$ is a compatible
		$\mathfrak K_{\mathcal V}$-admissible weak solution of \eqref{2.1}. Since $X_1$ and $X_2$ are defined on the same filtered probability space, are adapted to the common filtration, are driven by the same Brownian motion $B$, and satisfy
		\[
		X_1(0)=X_2(0)\,\,\overline{\mathbb P}\text{-a.s.},
		\]
		each $X_i$ is also a strong solution in the sense of Definition~\ref{de.2.1}. Therefore, pathwise uniqueness gives
		\[
		\overline{\mathbb{P}}\big(X_1=X_2\text{ in }\mathcal{C}_T\big)=1.
		\]
		Therefore, for $\mathcal L_{Y}$-almost every $y$, the probability
		measure $\Gamma(y,\cdot)$ is a Dirac measure. Hence there exists a
		Borel measurable map $\Phi$ such that
		\[
		\Gamma(y,\cdot)=\delta_{\Phi(y)}
		\,\,\text{for }\mathcal L_{Y}\text{-a.e. }y.
		\]
		Consequently,
		\[
		X=\Phi(Y)\,\,\mathbb P\text{-a.s.}
		\]
		
		Since $X$ is compatible with $Y$,
		Kurtz~\cite[Proposition~2.13]{Kurtz} implies that, for every
		$t\in[0,T]$,
		\[
		\sigma\bigl(X|_{[0,t]}\bigr)
		\subset\sigma(Y_t)
		\]
		up to completion. Hence, by the Doob--Dynkin lemma, for every
		rational $t\in[0,T]$ there exists a Borel map
		\[
		\Phi_t:
		\mathbb R^d\times C([0,t];\mathbb R^m)
		\rightarrow C([0,t];\mathbb R^d)
		\]
		such that
		\[
		\Phi(y)|_{[0,t]}=\Phi_t(\pi_t y)
		\,\,\text{for }\mathcal L_Y\text{-a.e. }y=(x,w),
		\]
		where
		\[
		\pi_t y:=\bigl(x,w|_{[0,t]}\bigr).
		\]
		
		By choosing these versions for rational times and modifying $\Phi$ on an $\mathcal L_Y$-null set, we may use the continuity of the paths to extend this relation to every $t\in[0,T]$. Thus $\Phi$ admits a Borel non-anticipative version. Recalling that $Y=(X(0),B)$, we obtain
		\[
		X=\Phi(X(0),B)\,\,\mathbb P\text{-a.s.}
		\]
		
        Now let $(\Omega',\mathcal F',\{\mathcal F'_t\}_{t\in[0,T]},\mathbb P')$
        be an arbitrary filtered probability space carrying an $\mathbb R^m$-valued Brownian motion $B'$ and an
        $\mathcal F'_0$-measurable random variable $X'_0$ satisfying $\mathcal L_{\mathbb P'}(X'_0)=\mu_0$. Since $B'$ is a Brownian motion with respect to
        $\{\mathcal F'_t\}_{t\in[0,T]}$ and $X'_0$ is $\mathcal F'_0$-measurable, $X'_0$ is independent of $B'$. The same argument shows that $X(0)$ is independent of $B$. Consequently,
        \[
        \mathcal L_{\mathbb P'}(X'_0,B')=\mu_0\otimes\mathsf W_m= \mathcal L_{\mathbb P}(X(0),B),
        \]
        where $\mathsf W_m$ denotes the $m$-dimensional Wiener measure.
        
        Define $X':=\Phi(X'_0,B')$. Since
        $X=\Phi(X(0),B)\,\,\mathbb P\text{-a.s.}$, the equality of the input laws yields
        $\mathcal L_{\mathbb P'}(X'_0,B',X')=
        \mathcal L_{\mathbb P}(X(0),B,X)$.
        The canonical martingale and covariation identities
        characterizing \eqref{2.1}, being determined by the joint law,
        therefore transfer from $(X(0),B,X)$ to $(X'_0,B',X')$.
        Hence $X'$ satisfies \eqref{2.1} with initial value $X'_0$ and
        driving Brownian motion $B'$.
        
        Moreover, since $\Phi$ is non-anticipative, $X'$ is
        $\{\mathcal F'_t\}$-adapted, and since $\Phi$ takes values in
        $C([0,T];\mathbb R^d)$, the process $X'$ has continuous paths.
        Finally,
        \[
        \mathcal L_{\mathbb P'}(X')
        =
        \mathcal L_{\mathbb P}(X)
        \in\mathfrak K_{\mathcal V}.
        \]
        It follows that $X'$ is a
        $\mathfrak K_{\mathcal V}$-admissible strong solution of
        \eqref{2.1} in the sense of Definition~\ref{de.2.1}. 
        
        Let $\widehat X$ be any other $\mathfrak K_{\mathcal V}$-admissible strong solution of \eqref{2.1} on the same filtered probability space, driven by the same Brownian motion $B'$ and starting from the same initial value $X'_0$. Then $\widehat X$ and $X'=\Phi(X'_0,B')$ are two strong solutions in the sense of Definition~\ref{de.2.1}, driven by the same Brownian motion and having the same initial value. Pathwise uniqueness gives
        \[
        \mathbb P'\left(\widehat X(t)=\Phi(X'_0,B')(t)\text{ for every }t\in[0,T]\right)=1.
        \]
        
        Thus $\Phi(X'_0,B')$ is the unique $\mathfrak K_{\mathcal V}$-admissible strong solution of \eqref{2.1} on the prescribed stochastic basis, up to indistinguishability.
        \end{proof}
	
    	We then turn to the proof of the main result. For a fixed constant $T>0$, we impose the following assumptions: 
	    \begin{enumerate}
	    \item[$\rm(H_1)$](Local boundedness and continuity). There exists a continuous coercive function $V_0:\mathbb{R}^d\rightarrow[0,\infty)$, with $V_0(0)=0$ and $V_0(x)\rightarrow\infty$ as $\|x\|_{\mathbb{R}^d}\rightarrow\infty$. Let $0<R_1<R_2<\cdots\uparrow\infty$ be a sequence such that, with $D_k:=\{V_0<R_k\}$, one has $\overline D_k\subset D_{k+1}$. The functions $b$ and $\sigma$ are bounded on $[0,T]\times \overline{D}_k\times\{\mu:\int_{\mathbb{R}^d}V_0(x)\mu(dx)\leq M\}$ for every $k\geq1$ and $M>0$, and are Borel in time and continuous in $(x,\mu)$ on this set.
		\item[$\rm(H_2)$](Coercivity and growth). There exists a nonnegative function $V$ satisfying the joint chain rule such that for all $t\in[0,T]$ and $\mu\in\mathcal{P}_2(\mathbb{R}^d)$ we have
		\begin{equation}\label{assumption.2}
			\mathcal{V}(t,\mu)=\int_{\mathbb{R}^d}V(t,x,\mu)\mu(dx)\geq\int_{\mathbb{R}^d}V_0(x)\mu(dx).
		\end{equation}
		Moreover, for some constant $K>0$,
		\begin{equation}\label{assumption.2.1}
			\mathcal{V}(0,\mu)\leq K(1+\int_{\mathbb{R}^d}V_0(x)\mu(dx)).
		\end{equation}
		There exist constants $l>1$ and $C>0$ such that for every $t\in[0,T]$ and $\mu\in\mathcal{P}_2(\mathbb{R}^d)$,
		\begin{equation}\label{assumption.3}
			\int_{\mathbb{R}^d}\big(\|b(t,x,\mu)\|_{\mathbb{R}^d}^{2l}+\|\sigma(t,x,\mu)\|_{\mathbb{R}^{d\times m}}^{2l}\big)\mu(dx)\leq C\big(1+\mathcal{V}(t,\mu)\big).
		\end{equation}
		
		\item[$\rm(H_3)$](Localized integrated Lyapunov condition). There exist $\rho,\eta\in L^1([0,T])$ such that for every $k\geq1$, $t\in[0,T]$, and $\mu\in\mathcal{P}_2(\mathbb{R}^d)$,
		\begin{equation}\label{assumption.5}
			\partial_t\mathcal{V}(t,\mu)+\int_{D_k}\mathcal{G}\mathcal{V}(t,z,\mu)\mu(dz)\leq \rho(t)\mathcal{V}(t,\mu)+\eta(t).
		\end{equation}
		\item[$\rm(H_4)$](Perron--Nagumo-type conditions). There exist constants $\alpha_1\in[0,\infty)$ and $\alpha_2\in[0,\frac{1}{4(T\vee1)})$, and a continuous nondecreasing function $u:(0,T]\to[0,\infty)$ such that $u$ is differentiable, $\lim\limits_{t\downarrow0^+}u(t)=0$, and $\liminf\limits_{t\downarrow0}u'(t)=\infty$. Moreover, there exists a continuous function $\omega:[0,T]\times[0,\infty)\to[0,\infty)$ that is nondecreasing and concave in its second variable. The quotient $\alpha\frac{\omega(t,x)}{u(t)}$ admits a continuous extension $F(t,x)$ to
		$[0,T]\times[0,\infty)$,  where $\alpha:=\frac{4\alpha_1(T\vee1)u(T)}{1-4\alpha_2(T\vee1)}$. And $x\equiv0$ is the unique solution of $x'(t)=F(t,x(t))$, $x(0)=0$. In addition, if we further extend $F(t,x)$ to $[0,T]\times\mathbb R$ by setting
		$F(t,x):=0$ for $x<0$, $F$ admits a $C^1$ upper function on $[0,T]$. For every $t\in(0,T]$, $x,y\in\mathbb R^d$, and $\mu,\nu\in\mathcal P_2(\mathbb R^d)$, $b$ and $\sigma$ satisfy
		\begin{equation}\label{3.1}
			\begin{aligned}
				&\|b(t,x,\mu)-b(t,y,\nu)\|^2_{\mathbb{R}^d}+\|\sigma(t,x,\mu)-\sigma(t,y,\nu)\|^2_{\mathbb{R}^{d\times m}}\\
				&\leq \alpha_1\big\{\omega(t,\|x-y\|^2_{\mathbb{R}^d})+\omega(t,\mathbb{W}_2^2(\mu,\nu))\big\}+\alpha_2\frac{u'(t)}{u(t)}\big\{\|x-y\|^2_{\mathbb{R}^d}+\mathbb{W}_2^2(\mu,\nu)\big\}.
			\end{aligned}
		\end{equation}
	\end{enumerate}
	
	\begin{thm}\label{thm}
		Assume $\rm(H_1)$--$\rm(H_4)$. Let $X_0\in L^{2l}(\Omega,\mathcal{F},\mathbb{P};\mathbb{R}^d)$ and suppose that $\mathcal V(0,\mathcal L_{X_0})<\infty$. Then equation~\eqref{2.1} has a strong solution $X$ on $[0,T]$ satisfying
		\begin{equation}\label{moment}
			\sup_{0\leq t\leq T}\mathcal V(t,\mathcal L_{X(t)})<\infty.
		\end{equation}
	    Moreover, this solution is unique among all strong solutions satisfying \eqref{moment}.
	\end{thm}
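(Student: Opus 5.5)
The proof splits into three parts. First, uniqueness among strong solutions satisfying \eqref{moment}. Second, existence of a compatible weak solution that lies in $\mathfrak K_{\mathcal V}$. Third, an application of Lemma~\ref{lem:yw-lyapunov-class}, which turns the weak solution into a strong one.

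\emph{Uniqueness.} Let $X,Y$ be two strong solutions satisfying \eqref{moment}, with the same $B$ and $X_0$. Set $Z:=X-Y$ and $\phi(t):=\mathbb E\sup_{s\le t}\|Z(s)\|^2$. Using $\mathbb W_2^2(\mathcal L_{X(s)},\mathcal L_{Y(s)})\le \mathbb E\|Z(s)\|^2\le\phi(s)$, Hölder on the drift and BDG on the martingale part, I would aim for
\[
\phi(t)\le 4(T\vee1)\int_0^t\Big(2\alpha_1\omega(s,\phi(s))+2\alpha_2\tfrac{u'(s)}{u(s)}\phi(s)\Big)ds .
\]
Here Jensen is applied to the concave $\omega(s,\cdot)$, and the constants are adjusted to match the definition of $\alpha$.

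The singular term $u'/u$ is non-integrable at $0$. To handle it, I would first show that $\phi(t)=o(u(t))$ as $t\downarrow0$. This should come from the moment bound (H$_2$) with $l>1$ together with $u(t)\to0$ and $\liminf u'=\infty$: near $0$ we have $u(t)\ge ct$, while $\phi(t)\lesssim t^{1-1/l}$, so the comparison needs care. If that fails, I would use the Nagumo-type argument, showing that $\psi(t):=\phi(t)/u(t)$ is finite and tends to $0$. With this in hand, a Gronwall-type manipulation of the quotient, using $\alpha_2<1/(4(T\vee1))$ to absorb the $u'/u$ term, gives
\[
\psi(t)\le \int_0^t F(s,\psi(s))\,ds .
\]
A comparison with the $C^1$ upper function of $F$, followed by the uniqueness of the zero solution of $x'=F(t,x)$, $x(0)=0$, forces $\psi\equiv0$. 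The localization needed to make BDG and the expectations finite comes from stopping at the exit times of $D_k$ and letting $k\to\infty$ with the help of \eqref{moment}.

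\emph{Existence.} Fix $k$ and $M$, and let $\mathcal M_{k}$ be the compact set of laws on $\overline D_k$ with $\int V_0\,d\mu\le M$. For frozen $\mu$ and $y\in\overline D_k$, the absorbed SDE \eqref{one-step-fixed-law-sde} has bounded coefficients that are continuous in $x$, so Skorokhod's classical existence theorem gives a weak solution. Lemma~\ref{lem:splicing} then yields Euler concatenations, and Lemma~\ref{lem:absorbed-euler-limit} yields a limit $X^k$ absorbed at $\theta_k$ solving \eqref{absorbed-limit-sde}; for the initial datum I would use $\nu_0$ conditioned or truncated to $D_k$.

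Next I would apply the joint chain rule to $\mathcal V(t,\mathcal L_{X^k(t)})$. Because of absorption, the generator acts only on $\{r<\theta_k\}$, so (H$_3$) on $D_k$ combined with Gronwall gives
\[
\sup_t\mathcal V(t,\mathcal L_{X^k(t)})\le C\bigl(1+\mathcal V(0,\nu_0)\bigr),
\]
uniformly in $k$. Through \eqref{assumption.2} this keeps all marginals in a fixed $\{\int V_0\le M\}$, which justifies the choice of $\mathcal M$ a posteriori. Since $V_0$ is coercive, $\mathbb P(\theta_k\le T)\le M/R_k\to0$.

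The growth bound \eqref{assumption.3} with $l>1$ gives uniform Kolmogorov estimates, hence tightness of the laws of $(X^k,B)$. Skorokhod representation (Proposition~\ref{Skorokhod}) and Vitali (Proposition~\ref{lem.2.3}) then yield $\mathbb W_2$ convergence of the marginals. Passing to the limit in the martingale problem exactly as in Lemma~\ref{lem:absorbed-euler-limit}, using continuity from (H$_1$), produces a weak solution with $\mathcal L_X\in\mathfrak K_{\mathcal V}$; the bound on $\mathcal V$ passes to the limit by lower semicontinuity.

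Compatibility holds because $B$ is a Brownian motion with respect to a filtration to which $X$ is adapted. Lemma~\ref{lem:yw-lyapunov-class}, combined with the uniqueness step, then gives a strong solution on the original basis.

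\emph{Main obstacles.} The hardest step is likely the uniqueness comparison near $t=0$, specifically proving that $\phi(t)/u(t)\to0$ and making the upper/lower function comparison rigorous for the integral inequality. A second delicate point is the application of the chain rule to absorbed processes, where the drift is $\mathbf 1_{\{r<\theta\}}b$ and (H$_3$) controls only the part of the integral over $D_k$.
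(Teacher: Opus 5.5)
\emph{Uniqueness.} Working with the supremum and BDG does not reproduce the constants of ${\rm(H_4)}$, and those constants are fixed by the hypothesis, so they cannot be adjusted. Your own display puts $8\alpha_2(T\vee1)$ in front of $\frac{u'}{u}\phi$, which the assumption $\alpha_2<\frac{1}{4(T\vee1)}$ does not let you absorb. A correct Doob/BDG estimate gives $4\alpha_2\max(T,4)$, which is still too large whenever $T<4$ (e.g.\ $T=\frac12$ in Section~4). Likewise the resulting multiplier of $\omega/u$ exceeds the specific $\alpha$ of ${\rm(H_4)}$. Uniqueness of the zero solution of $x'=F(t,x)$ is assumed only for that $\alpha$ and is not inherited by larger multipliers.

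Since the coefficients depend only on $X(t)$ and $\mathcal L_{X(t)}$, no supremum is needed. Set $I(t)=\mathbb E\|Z(t)\|^2$. Cauchy--Schwarz on the drift, It\^o's isometry, $\mathbb W_2^2\le I$ and Jensen give exactly $I(t)\le4(T\vee1)\int_0^t\{\alpha_1\omega(r,I(r))+\alpha_2\frac{u'(r)}{u(r)}I(r)\}dr$. Path continuity then upgrades $I\equiv0$ to pathwise uniqueness. Stopping at exit times is unnecessary here, and it would spoil the inequality because the $\mathbb W_2$ terms involve unstopped laws.

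You also leave the small-time step open, and your heuristic $\phi\lesssim t^{1-1/l}$ would not suffice: $\liminf_{t\downarrow0}u'=\infty$ only gives $u(t)/t\to\infty$. The missing observation is that \eqref{assumption.3} holds pointwise in $t$. Together with \eqref{moment} and H\"older in $\mu_i(r)$, it gives $\sup_r\mathbb E(\|\Delta b(r)\|^2+\|\Delta\sigma(r)\|^2)<\infty$, hence $I(t)\le M_Tt$. Choosing $\delta$ with $M_T\le\varepsilon u'$ on $(0,\delta]$ then yields $I\le\varepsilon u$ there. After that, pass to the running supremum $\Theta$ of $I/u$ and to $\zeta:=u(T)\Theta$. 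This gives $\zeta(t)\le\int_0^tF(r,\zeta(r))\,dr$, which is the inequality the upper/lower-function comparison needs; your $\psi\le\int F(s,\psi)\,ds$ is not quite what comes out.

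\emph{Existence.} You get the Lyapunov bound by applying the chain rule to the limit $X^k$ of Lemma~\ref{lem:absorbed-euler-limit}, as if absorption made the generator act exactly on $D_k$. But $\theta_k$ there is only a delayed stopping time, not the first exit time. Before $\theta_k$ the limit may sit on $\partial D_k$ for positive time, since no nondegeneracy is assumed. The chain rule therefore produces $\mathbb E[\mathbf 1_{\{r<\theta_k\}}\mathcal G\mathcal V(r,X^k(r),\mu^k(r))]$. This differs from $\int_{D_k}\mathcal G\mathcal V(r,z,\mu^k(r))\mu^k(r)(dz)$ by the contribution of $\{r<\theta_k,\,X^k(r)\in\partial D_k\}$, which has no sign and about which ${\rm(H_3)}$ says nothing.

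The paper avoids this by doing the estimate on the Euler concatenations $X_k^n$. These are absorbed at their exact first exit, so $\mathbf 1_{\{r<\tau_k^n\}}=\mathbf 1_{\{X_k^n(r)\in D_k\}}$ and ${\rm(H_3)}$ applies. The lag between $\nu_k^n$ in the coefficients and $\mu_k^n$ in $\mathcal V$ produces an error $\varepsilon_{k,n}\to0$, by \eqref{euler-lag-modulus} and continuity on the compact set. Gronwall then gives a bound uniform in $k$ for $n\ge N_k$, and continuity transfers it to the limit.

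Two smaller points. Your a posteriori choice of $\mathcal M$ is circular as written, but harmless: take $\mathcal M_k=\mathcal P(\overline D_k)$, on which $\int V_0\,d\mu\le R_k$ automatically. And since $\theta_k$ is not a functional of $X^k$, you must carry the indicator processes through the Skorokhod representation in the $k\to\infty$ step, as the paper does, to show that the absorption disappears in the limit.
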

	\begin{proof}
		(i) We first prove pathwise uniqueness. Set $\mu_0:=\mathcal{L}_{X_0}$. Let $X_1$ and $X_2$ be two strong solutions of \eqref{2.1} satisfying \eqref{moment}, with
		$X_1(0)=X_2(0)$ almost surely. Set $\mu_i(t):=\mathcal{L}_{X_i(t)}$ for $i=1,2$, and define
		\[
		M:=\max_{i=1,2}\Big\{\sup_{0\leq t\leq T}\mathcal V(t,\mu_i(t))\Big\}<\infty.
		\]
		
		Assumption $\rm(H_2)$ first yields
		\[
		\mathbb E\int_0^T\big(\|b(r,X_i(r),\mu_i(r))\|_{\mathbb{R}^d}^{2l}
		+\|\sigma(r,X_i(r),\mu_i(r))\|_{\mathbb{R}^{d\times m}}^{2l}\big)dr
		\leq CT(1+M).
		\]
		H\"older's inequality for the drift integral and the Burkholder--Davis--Gundy inequality for the stochastic integral consequently give
		\[
		\mathbb E\big[\sup_{0\leq t\leq T}\|X_i(t)\|_{\mathbb{R}^d}^{2l}\big]
		\leq C_{l,T}\bigl(\mathbb E\|X_0\|_{\mathbb{R}^d}^{2l}+1+M\bigr)<\infty.
		\]
		Set $\Delta X:=X_1-X_2$ and
		\[
		\begin{aligned}
			\Delta b(r)&:=b(r,X_1(r),\mu_1(r))-b(r,X_2(r),\mu_2(r)),\\
			\Delta\sigma(r)&:=\sigma(r,X_1(r),\mu_1(r))-\sigma(r,X_2(r),\mu_2(r)).
		\end{aligned}
		\]
		Then
		\begin{equation}\label{3.3}
			\Delta X(t)=\int_0^t\Delta b(r)\,dr+\int_0^t\Delta\sigma(r)\,dB(r).
		\end{equation}
		H\"{o}lder's inequality and It\^o's isometry yield
		\[
		\mathbb E\|\Delta X(t)\|_{\mathbb{R}^d}^2
		\leq2(T\vee1)\int_0^t\mathbb E\big(\|\Delta b(r)\|_{\mathbb{R}^d}^2+\|\Delta\sigma(r)\|_{\mathbb{R}^{d\times m}}^2\big)\,dr.
		\]
		
		Set $I(t):=\mathbb E\|\Delta X(t)\|_{\mathbb{R}^d}^2$.
		The joint law of $(X_1(r),X_2(r))$ is a coupling, so $\mathbb W_2^2(\mu_1(r),\mu_2(r))\leq I(r)$. Hence Jensen's inequality, the concavity and monotonicity of $\omega$, and \eqref{3.1} give directly
		\begin{equation}\label{3.6}
			I(t)\leq4(T\vee1)\int_0^t\left\{\alpha_1\omega(r,I(r))
			+\alpha_2\frac{u'(r)}{u(r)}I(r)\right\}dr.
		\end{equation}
		For $i=1,2$, Tonelli's theorem, H\"{o}lder's inequality with respect to the probability measure $\mu_i(r)$, $\rm(H_2)$, and the definition of $M$ give
		\begin{equation}\label{3.5}
			\begin{aligned}
				\mathbb{E}\int_{0}^{t}\|b(r,X_i(r),\mu_i(r))\|_{\mathbb{R}^d}^2dr
				&=\int_{0}^{t}\int_{\mathbb{R}^d}\|b(r,x,\mu_i(r))\|_{\mathbb{R}^d}^2\mu_i(r)(dx)dr\\
				&\leq\int_{0}^{t}\left(\int_{\mathbb{R}^d}\|b(r,x,\mu_i(r))\|_{\mathbb{R}^d}^{2l}\mu_i(r)(dx)\right)^{\frac{1}{l}}dr\\
				&\leq\int_{0}^{t}\big[C\big(1+\mathcal{V}(r,\mu_i(r))\big)\big]^{\frac{1}{l}}dr\\
				&\leq t\,[C(1+M)]^{1/l}<\infty.
			\end{aligned}  	
		\end{equation}
		The same estimate holds with $b$ replaced by $\sigma$. Consequently, both coefficient processes belong to $L^2(\Omega\times[0,T])$. For $0\leq s<t\leq T$, \eqref{3.5} and It\^o's isometry show that
		\[
		\begin{aligned}
		\mathbb E\|X_i(t)-X_i(s)\|_{\mathbb{R}^d}^2&\leq 2(t-s)\mathbb E\int_s^t\|b(r,X_i(r),\mathcal L_{X_i(r)})\|_{\mathbb{R}^d}^2dr\\
		&+2\mathbb E\int_s^t\|\sigma(r,X_i(r),\mathcal L_{X_i(r)})\|_{\mathbb{R}^{d\times m}}^2dr\rightarrow0\,\,\hbox{as}\,\,t-s\rightarrow0.
		\end{aligned}
		\]
	    Thus each $X_i$ is continuous as an $L^2$-valued map, and $I$ is continuous. Set $\theta(t):=\frac{I(t)}{u(t)}$ for $t>0$. We claim that $\theta(t)\to0$ as $t\downarrow0$. Indeed, \eqref{3.5} and its analogue for $\sigma$ give a constant $M_T<\infty$, depending only on $C,l,T$, and $M$, such that
		\[
		I(t)\leq2(T\vee1)\int_0^t\mathbb E\big(\|\Delta b(r)\|_{\mathbb{R}^d}^2+\|\Delta\sigma(r)\|_{\mathbb{R}^{d\times m}}^2\big)dr\leq M_Tt.
		\]
		
		Given $\varepsilon>0$, choose $\delta>0$ so that $M_T\leq\varepsilon u'(t)$ on $(0,\delta]$. Then $I(t)\leq\varepsilon\int_0^tu'(r)dr\leq\varepsilon u(t)$ on this interval, proving the claim. We therefore extend $\theta$ continuously to $0$ by $\theta(0)=0$. From \eqref{3.6},
		\[
		\theta(t)\leq4\alpha_1(T\vee1)\int_{0}^{t}\frac{\omega(r,u(r)\theta(r))}{u(r)}dr
		+\frac{4\alpha_2(T\vee1)}{u(t)}\int_{0}^{t}u'(r)\theta(r)dr.
		\]
		Set $\Theta(t):=\sup\limits_{0\leq r\leq t}\theta(r)$. We have
		\[
		\begin{aligned}
			\theta(t)\leq&~4\alpha_1(T\vee1)\int_{0}^{t}\frac{\omega\big(r,u(r)\Theta(r)\big)}{u(r)}dr+\frac{4\alpha_2(T\vee1)}{u(t)}\int_{0}^{t}u'(r)\Theta(r)dr\\
			\leq&~4\alpha_1(T\vee1)\int_{0}^{t}\frac{\omega\big(r,u(T)\Theta(r)\big)}{u(r)}dr+4\alpha_2(T\vee1)\Theta(t).
		\end{aligned}
		\]
		Clearly, the right-hand side of the above inequality is nondecreasing with respect to $t$, and hence
		\[
		\theta(s)\leq4\alpha_1(T\vee1)\int_{0}^{t}\frac{\omega\big(r,u(T)\Theta(r)\big)}{u(r)}dr+4\alpha_2(T\vee1)\Theta(t),\,\,0< s\leq t\leq T.
		\]
		Taking the supremum of the left-hand side over the interval $[0,t]$, we have
		\[
		\Theta(t)\leq4\alpha_1(T\vee1)\int_{0}^{t}\frac{\omega\big(r,u(T)\Theta(r)\big)}{u(r)}dr+4\alpha_2(T\vee1)\Theta(t),\,\,t\in(0,T].
		\]
		
	    Define $\zeta(t):=u(T)\Theta(t)$. Then we have 
		\begin{equation}\label{3.9}
		\zeta(t)\leq \frac{4\alpha_1(T\vee1)u(T)}{1-4\alpha_2(T\vee1)}\int_{0}^{t}\frac{\omega\big(r,\zeta(r)\big)}{u(r)}dr=\alpha\int_{0}^{t}\frac{\omega\big(r,\zeta(r)\big)}{u(r)}dr,\,\,t\in(0,T].
		\end{equation}      
		Clearly $\zeta(0)=0$. We now claim that \eqref{3.9} forces $\zeta=0$ on $(0,T]$. Define $\Psi(t):=\alpha\int_{0}^{t}\frac{\omega(r,\zeta(r))}{u(r)}dr$ with $\Psi(0)=0$. Then $\zeta(t)\leq\Psi(t)$ and
		\[
		\Psi'(t)=F(t,\zeta(t))\leq F(t,\Psi(t)),\,\,t\in[0,T].
		\]
		That is, $\Psi$ is a lower function of the equation 
		\begin{equation}\label{equation.1}
			x'(t)=F(t,x(t))+\varepsilon,\,\,\varepsilon>0
		\end{equation}
		for $t\in[0,T]$ with $x(0)=0$. By ${\rm(H_4)}$, let $\Psi^*$ be an upper function for $F(t,x)$, and we have
		\[
		D_{\pm}\Psi^*(t)>F(t,\Psi^*(t))\,\,\hbox{on}\,\,(0,T).
		\]
		Then, by the continuity of $F$ and $\Psi^*\in C^1$, $\Psi^*$ is an upper function for (\ref{equation.1}) for some $\varepsilon>0$ and $\Psi$ is a lower function for (\ref{equation.1}) on $[0,T]$. Proposition~2.2 of \cite{LZ} therefore gives a solution of \eqref{equation.1}. We denote its maximum solution by $z_\varepsilon$, with $z_\varepsilon(0)=0$. Extend $F(t,x)$ continuously by
		\begin{equation}\label{cases.1}
			F(t,x):=
			\begin{cases}
				F(0,x),&t\in[-T,0],\\
				F(t,x),&t\in(0,T],\\
				F(T,x),&t\in(T,2T].
			\end{cases}
		\end{equation}
	    
		Applying \cite[Proposition 2.3]{LZ} to (\ref{cases.1}), we obtain a function $z_0(t)$ which is the maximum solution of (\ref{equation.1}) for $\varepsilon=0$ on $[0,T]$ and $z_\varepsilon(t)$ converges uniformly to $z_0(t)$ on $[0,T]$ as $\varepsilon\rightarrow0$. Since $\rm (H_4)$ asserts that $z_0(t)\equiv0$ is the unique solution to (\ref{equation.1}) for $\varepsilon=0$, we have
		\[
		0\leq\Psi(t)\leq z_\varepsilon(t)\rightarrow z_0(t)\equiv0\,\,\hbox{as}\,\,\varepsilon\rightarrow0,
		\]
		which proves the claim.
		
		Thus, $X_1(t)=X_2(t)~\mathbb{P}\text{-a.s.}$ for $t\in[0,T]$. $X_1(r)=X_2(r)$ for all rational $r\in[0,T]$, except on some set of probability zero. As $X_1$ and $X_2$ have continuous sample paths almost surely, we have
		\[
		\mathbb{P}\big(\max\limits_{t\in[0,T]}\|X_1(t)-X_2(t)\|_{\mathbb{R}^d}>0\big)=0,
		\] 
		which proves pathwise uniqueness.
		
		(ii) We next prove existence. Set
		\[
		X_0^k:=X_0\mathbf1_{\{X_0\in D_k\}},\,\,
		\mu_0^k:=\mathcal L_{X_0^k}.
		\]
		Since $D_k\uparrow\mathbb R^d$ and $X_0\in L^{2l}$, dominated convergence gives
		\begin{equation}\label{initial-truncation-convergence}
			\mathbb E\|X_0^k-X_0\|^{2l}
			=\mathbb E\big[\|X_0\|^{2l}\mathbf1_{\{X_0\notin D_k\}}\big]\rightarrow0.
		\end{equation}
		In particular, $\mu_0^k\to\mu_0$ in $\mathbb W_2$. Moreover,
		$V_0(0)=0$ and $0\in D_k$ imply
		\[
		\mu_0^k(V_0)
		=\mathbb E\big[V_0(X_0)\mathbf1_{\{X_0\in D_k\}}\big]
		\leq\mu_0(V_0)\leq\mathcal V(0,\mu_0).
		\]
		Hence (\ref{assumption.2.1}) yields
		\begin{equation}\label{initial-lyapunov-bound}
			C_0:=\sup_{k\geq1}\mathcal V(0,\mu_0^k)
			\leq K\bigl(1+\mathcal V(0,\mu_0)\bigr)<\infty.
		\end{equation}
	
		For the localized construction, choose
		$\chi_k\in C_c^\infty(\mathbb R^d;[0,1])$ such that
		$\chi_k=1$ on $D_k$ and
		$\operatorname{supp}\chi_k\subset D_{k+1}$, and set
		\[
		b_k(t,x,\mu):=\chi_k(x)b(t,x,\mu),\,\,
		\sigma_k(t,x,\mu):=\chi_k(x)\sigma(t,x,\mu).
		\]
		For $n\geq1$, let $T_n:=T/n$, $t_i^n:=iT_n$, and
		$\varphi_n(t):=t_i^n$ for $t\in[t_i^n,t_{i+1}^n)$, with the endpoint convention $\varphi_n(T):=T$.
		The smoothly truncated auxiliary coefficients $b_k$ and $\sigma_k$ are used
		only to produce the following one-step weak solution.  Let $\nu$ be
		supported by $\overline D_k$, let $y\in\overline D_k$, and fix the
		$i$-th mesh interval $[t_i^n,t_{i+1}^n]$. Assumption $\rm(H_1)$ makes $b_k$ and $\sigma_k$ bounded on the relevant compact set. Hence the Stroock--Varadhan theorem \cite[Theorem~6.1.7]{STR} yields a weak solution $(X_{i,k}^{n,y,\nu},B_{i,k}^{n,y,\nu})$ of
		\[
		dX_{i,k}^{n,y,\nu}(t)=b_k\bigl(t,X_{i,k}^{n,y,\nu}(t),\nu\bigr)dt+
		\sigma_k\bigl(t,X_{i,k}^{n,y,\nu}(t),\nu\bigr)
		dB_{i,k}^{n,y,\nu}(t),\,\, X_{i,k}^{n,y,\nu}(t_i^n)=y.
		\]
		
		Define
		\[
		\tau_{i,k}^{n,y,\nu}:=\inf\{t\in[t_i^n,t_{i+1}^n]:
		X_{i,k}^{n,y,\nu}(t)\notin D_k\},\,\,
		Y_{i,k}^{n,y,\nu}(t):=
		X_{i,k}^{n,y,\nu}\bigl(t\wedge\tau_{i,k}^{n,y,\nu}\bigr).
		\]
		Since $\chi_k=1$ on $D_k$, this stopped process satisfies
		\begin{equation}\label{absorbed-one-step}
			Y_{i,k}^{n,y,\nu}(t)=y+\int_{t_i^n}^t\mathbf1_{\{r<\tau_{i,k}^{n,y,\nu}\}}b\bigl(r,Y_{i,k}^{n,y,\nu}(r),\nu\bigr)dr+\int_{t_i^n}^t\mathbf1_{\{r<\tau_{i,k}^{n,y,\nu}\}}
			\sigma\bigl(r,Y_{i,k}^{n,y,\nu}(r),\nu\bigr)dB_{i,k}^{n,y,\nu}(r)
		\end{equation}
		for $t\in[t_i^n,t_{i+1}^n]$ and is absorbed after $\tau_{i,k}^{n,y,\nu}$. 
	
	    Applying Lemma~\ref{lem:splicing} with $D=D_k$ and initial law
		$\mu_0^k$ gives continuous adapted processes $(X_k^n,B_k^n)$ on stochastic
		bases $(\Omega^{k,n},\mathcal F^{k,n},(\mathcal F_t^{k,n})_{t\in[0,T]},
		\mathbb P^{k,n})$, where $B_k^n$ is a Brownian motion and $\tau_k^n:=\inf\{t\in[0,T]:X_k^n(t)\notin D_k\}$. Moreover, they satisfy
		\begin{equation}\label{EulerSDE}
		X_k^n(t)=X_k^n(0)+\int_0^t \mathbf{1}_{\{r<\tau_k^n\}}b\bigl(r,X_k^n(r),\nu_k^n(r)\bigr)dr+\int_0^t \mathbf{1}_{\{r<\tau_k^n\}}\sigma\bigl(r,X_k^n(r),\nu_k^n(r)\bigr)dB_k^n(r),
		\end{equation}
		where
		\[
		\mu_k^n(t):=\mathcal L_{\mathbb P^{k,n}}(X_k^n(t)),\,\,
		\nu_k^n(t):=\mu_k^n(\varphi_n(t)).
		\]
		Since the paths are continuous and are absorbed at their first exit, we have
		\begin{equation}\label{euler-activity-identity}
		X_k^n(t)=X_k^n(t\wedge\tau_k^n),\,\,0\leq t\leq T.
		\end{equation}
		
		By the regularity in Lemma~\ref{lem.chain rule}, 
		for a probability measure $\nu$, we write, with a slight abuse of notation,
		\[
	    \mathcal G^\nu\mathcal V(t,z,\mu):=b(t,z,\nu)\cdot\partial_\mu\mathcal V(t,\mu)(z)+\frac12\textnormal{tr}\!\left[\sigma\sigma^\top(t,z,\nu)\partial_z\partial_\mu\mathcal V(t,\mu)(z)\right].
		\]
		The joint chain rule,
		applied to (\ref{EulerSDE}) and then integrated, yields
		\begin{equation}\label{euler-lyapunov-flow}
		\mathcal V(t,\mu_k^n(t))=\mathcal V(0,\mu_0^k)+\int_0^t\left[\partial_r\mathcal V(r,\mu_k^n(r))+\int_{D_k}\mathcal G^{\nu_k^n(r)}\mathcal V(r,z,\mu_k^n(r))\mu_k^n(r)(dz)\right]dr.
		\end{equation}
	
		For fixed $k$, $\rm(H_1)$ gives a bound for $b$ and $\sigma$ on
		$[0,T]\times\overline D_k\times\mathcal P(\overline D_k)$. Consequently,
		for a constant $C_k$ independent of $n$,
		\begin{equation}\label{euler-lag-modulus}
			\sup_{0\leq t\leq T}
			\mathbb W_2^2\bigl(\mu_k^n(t),\nu_k^n(t)\bigr)
			\leq C_k T_n\rightarrow0.
		\end{equation}
		The regularity in the joint chain rule and the continuity in $(x,\mu)$ imply,
		for every fixed $k$,
		\begin{equation}\label{euler-lag-error}
				\varepsilon_{k,n}:=\int_0^T\left|
				\int_{D_k}\left[
				\mathcal G^{\nu_k^n(t)}\mathcal V(t,z,\mu_k^n(t))
				-\mathcal G\mathcal V(t,z,\mu_k^n(t))\right]
				\mu_k^n(t)(dz)\right|dt\rightarrow0.
		\end{equation}
		To see this, for each fixed time, \eqref{euler-lag-modulus} and continuity on the compact state--measure set give convergence of the integrand. The integrand is measurable in time and is uniformly bounded by a constant depending only on $k$. Hence the dominated convergence theorem applies. Then writing \eqref{euler-lyapunov-flow} as the $\mathcal G$-term plus the error
		and using $\rm(H_3)$ gives
		\[
		\mathcal V(t,\mu_k^n(t))
		\leq C_0+\varepsilon_{k,n}
		+\int_0^t\left[
		|\rho(r)|\mathcal V(r,\mu_k^n(r))+|\eta(r)|\right]dr.
		\]
		For every $k$, choose $N_k$ such that $\varepsilon_{k,n}\leq1$ for
		$n\geq N_k$. Gronwall's lemma gives
		\begin{equation}\label{euler-lyapunov-uniform}
			\sup_{k\geq1}\sup_{n\geq N_k}\sup_{0\leq t\leq T}
			\mathcal V(t,\mu_k^n(t))
			\leq C_{\mathcal V}(T),
		\end{equation}
		where
		\[
		C_{\mathcal V}(T):=
		\left(C_0+1+\int_0^T|\eta(r)|dr\right)
		\exp\!\left(\int_0^T|\rho(r)|dr\right).
		\]
		
		For fixed $k$, the family
		$\mathcal{M}_k:=\mathcal P(\overline D_k)$ is compact in $\mathbb W_2$. Apply Lemma~\ref{lem:absorbed-euler-limit} to the tail $n\geq N_k$. It
		produces a weak solution $(\bar X_k,\bar B_k,\bar \theta_k)$ on $(\bar{\Omega}_k,\bar{\mathcal{F}}_k,\bar{\mathbb{P}}_k)$ of
		\begin{equation}\label{delayed-limit-sde}
		\bar X_k(t)=\bar X_k(0)+\int_0^t\mathbf1_{\{r<\bar\theta_k\}}
		b(r,\bar X_k(r),\bar\mu_k(r))dr+\int_0^t\mathbf1_{\{r<\bar\theta_k\}}\sigma(r,\bar X_k(r),\bar\mu_k(r))d\bar B_k(r),
		\end{equation}
		where $\bar\mu_k(t):=\mathcal L_{\bar{\mathbb P}_k}(\bar X_k(t))$, and
		\begin{equation}\label{delayed-limit-stopping}
			\bar X_k(t)=\bar X_k(t\wedge\bar\theta_k),\,\,
			\bar X_k(\bar \theta_k)\in\partial D_k\,\,\hbox{on }\{\bar \theta_k\leq T\}.
		\end{equation}
		By continuity on the compact set, the estimate \eqref{euler-lyapunov-uniform} passes to the following limit
		\begin{equation}\label{truncated-lyap-uniform}
			\sup_{k\geq1}\sup_{0\leq t\leq T}
			\mathcal V(t,\bar\mu_k(t))\leq C_{\mathcal V}(T).
		\end{equation}
		Since $\partial D_k\subset\{V_0=R_k\}$, \eqref{delayed-limit-stopping} and
		\eqref{truncated-lyap-uniform} imply
		\begin{equation}\label{exit-probability}
			\bar{\mathbb P}_k(\bar\theta_k\leq T)
			\leq\frac{\bar{\mathbb E}_kV_0(\bar X_k(T))}{R_k}
			\leq\frac{C_{\mathcal V}(T)}{R_k}\rightarrow0.
		\end{equation}
		Furthermore, $\rm(H_2)$ and \eqref{truncated-lyap-uniform} imply
		\begin{equation}\label{uniform-coefficient-moment}
			\bar{\mathbb E}_k\int_0^T\mathbf1_{\{t<\bar\theta_k\}}
			\left(\|b(t,\bar X_k(t),\bar\mu_k(t))\|^{2l}
			+\|\sigma(t,\bar X_k(t),\bar\mu_k(t))\|^{2l}\right)dt
			\leq CT\bigl(1+C_{\mathcal V}(T)\bigr).
		\end{equation}
		The Burkholder--Davis--Gundy inequality, H\"{o}lder's inequality, and
		\eqref{uniform-coefficient-moment} yield
		\begin{equation}\label{uniform-2l-path}
			\sup_{k\geq1}\bar{\mathbb E}_k
			\left[\sup_{0\leq t\leq T}\|\bar X_k(t)\|_{\mathbb{R}^d}^{2l}\right]<\infty,
		\end{equation}
		and
		\begin{equation}\label{uniform-k-modulus}
			\bar{\mathbb E}_k\|\bar X_k(t)-\bar X_k(s)\|^{2l}
			\leq C_l|t-s|^l,\qquad 0\leq s<t\leq T.
		\end{equation}
		
		For $0\leq s\leq T+2$, define
		\[
		\bar A_k(s):=
		\begin{cases}
			\mathbf1_{\{s<1+\bar\theta_k\}},&\bar\theta_k\leq T,\\
			\mathbf1_{\{s<T+2\}},&\bar\theta_k=\infty.
		\end{cases}
		\]
		For $1\leq s\leq T+1$, let
		\[
		\bar Z_k(s):=\bar X_k(s-1),\,\,\bar W_k(s):=\bar B_k(s-1).
		\]
		The same compactness argument as in the proof of
		Lemma~\ref{lem:absorbed-euler-limit} shows that the laws of $\bar A_k$
		are supported by the compact set $\mathscr A$ defined there. Together with \eqref{uniform-2l-path} and
		\eqref{uniform-k-modulus}, this yields tightness of $\bigl(\bar Z_k,\bar W_k,\bar A_k\bigr)$ on
		\[
		C([1,T+1];\mathbb R^d)\times C([1,T+1];\mathbb R^m)
		\times D([0,T+2];\{0,1\}).
		\]
		Choose a subsequence, still indexed by $k$, such that $\frac{C_{\mathcal V}(T)}{R_k}\leq2^{-k}$. On a common Skorokhod representation space $(\tilde{\Omega},\tilde{\mathcal{F}},\tilde{\mathbb{P}})$, after taking a further subsequence, we have
		\[
		(\tilde Z_k,\tilde W_k,\tilde A_k)
		\rightarrow(\tilde Z,\tilde W,\tilde A)
		\,\,\tilde{\mathbb{P}}\hbox{-a.s.}
		\]
		with uniform convergence in the first two coordinates and $J_1$ convergence
		in the third. Let $\tilde\theta_k$ be the stopping time read from
		$\tilde A_k$. Equality in law and \eqref{exit-probability} give
		\[
		\tilde{\mathbb P}(\tilde\theta_k\leq T)\leq2^{-k}.
		\]
		Hence the Borel--Cantelli theorem yields
		\[
		\lim\limits_{k\rightarrow\infty}\tilde\theta_k=\infty
		\,\,
		\tilde{\mathbb P}\text{-a.s.}
		\]
		Equivalently,
		$\tilde A_k=a_{T+2}$ for all sufficiently large $k$ almost surely.
		Since $\tilde A_k\to\tilde A$ in the $J_1$ topology, we have
		\[
		\tilde A=a_{T+2}
		\,\,
		\tilde{\mathbb P}\text{-a.s.}
		\]
		In particular, for every $t\in[0,T]$,
		\[
		\mathbf1_{\{t<\tilde\theta_k\}}
		\rightarrow1
		\,\,
		\tilde{\mathbb P}\text{-a.s.}
		\]
		We define
		\[
		\tilde X_k(t):=\tilde Z_k(t+1),\,\,
		\tilde B_k(t):=\tilde W_k(t+1)-\tilde W_k(1),
		\]
		and define $(\tilde X,\tilde B)$ in the same way from
		$(\tilde Z,\tilde W)$. Put $\tilde\mu_k(t):=\mathcal L_{\tilde{\mathbb P}}
		(\tilde X_k(t))$ and
		$\tilde\mu(t):=\mathcal L_{\tilde{\mathbb P}}(\tilde X(t))$.
		By \eqref{uniform-2l-path}, the almost sure uniform convergence and Lemma~\ref{lem.2.3}, we have
		\[
		\lim\limits_{k\rightarrow\infty}\tilde{\mathbb{E}}\sup\limits_{t\in[0,T]}\|\tilde{X}_k(t)-\tilde{X}(t)\|^2_{\mathbb{R}^d}=0.
		\]
		Thus we have
		\[
		\sup_{0\leq t\leq T}\mathbb W_2(\tilde\mu_k(t),\tilde\mu(t))
		\rightarrow0.
		\]
		
		The preceding convergence is obtained through a law-based approximation, rather than by imposing pathwise agreement across the domains. Figure~\ref{fig:law-based-truncation-removal} provides a schematic illustration.
		\begin{figure}[H]
			\centering
			\begin{tikzpicture}[
				x=1cm,
				y=1cm,
				font=\small,
				boundary/.style={draw=black,line width=0.7pt},
				stoppedpath/.style={
					draw=black,
					line width=0.85pt,
					line cap=round,
					line join=round
				},
				limitpath/.style={
					draw=black!48,
					line width=0.8pt,
					line cap=round,
					line join=round
				},
				annotation/.style={font=\scriptsize,align=left}
				]
				
				\path[boundary,fill=black!4]
				(-3.80,-0.72)
				.. controls (-4.10,0.90) and (-3.02,2.14) .. (-1.28,2.24)
				.. controls (0.42,2.34) and (1.58,1.63) .. (1.55,0.35)
				.. controls (1.52,-0.82) and (0.56,-1.48) .. (-1.05,-1.56)
				.. controls (-2.48,-1.62) and (-3.54,-1.25) .. cycle;
				
				\node at (-3.13,1.48) {$D_k$};
				
				\coordinate (y0) at (-2.82,-0.98);
				\coordinate (stop) at (1.51,0.37);
				
				\fill (y0) circle (1.8pt);
				\node[below left=1pt] at (y0) {$y$};
				
				\draw[stoppedpath] plot coordinates {
					(-2.820,-0.980) (-2.772,-0.813) (-2.733,-0.883) (-2.685,-0.896)
					(-2.632,-0.788) (-2.596,-0.804) (-2.546,-0.681) (-2.482,-0.715)
					(-2.471,-0.628) (-2.404,-0.583) (-2.357,-0.477) (-2.306,-0.642)
					(-2.247,-0.508) (-2.204,-0.414) (-2.168,-0.415) (-2.150,-0.397)
					(-2.081,-0.161) (-2.050,-0.133) (-1.993,-0.021) (-1.943,0.040)
					(-1.910,0.009) (-1.855,0.135) (-1.794,0.131) (-1.761,0.301)
					(-1.724,0.332) (-1.685,0.499) (-1.642,0.467) (-1.584,0.572)
					(-1.528,0.551) (-1.491,0.519) (-1.450,0.605) (-1.365,0.660)
					(-1.367,0.759) (-1.295,0.697) (-1.246,0.881) (-1.220,0.903)
					(-1.147,0.875) (-1.131,0.760) (-1.056,0.741) (-1.014,0.808)
					(-0.977,0.942) (-0.934,1.039) (-0.892,1.016) (-0.827,1.189)
					(-0.801,1.106) (-0.719,1.182) (-0.690,1.279) (-0.646,1.274)
					(-0.606,1.317) (-0.552,1.377) (-0.519,1.393) (-0.468,1.284)
					(-0.399,1.361) (-0.387,1.320) (-0.344,1.334) (-0.275,1.392)
					(-0.232,1.241) (-0.196,1.185) (-0.151,1.202) (-0.112,1.275)
					(-0.051,1.291) (-0.009,1.331) (0.039,1.343) (0.104,1.209)
					(0.125,1.387) (0.194,1.413) (0.226,1.339) (0.275,1.260)
					(0.315,1.359) (0.352,1.207) (0.387,1.239) (0.458,1.359)
					(0.493,1.387) (0.568,1.276) (0.601,1.267) (0.629,1.319)
					(0.687,1.238) (0.746,1.091) (0.783,1.236) (0.833,1.203)
					(0.851,0.998) (0.893,0.934) (0.964,1.031) (1.010,1.016)
					(1.037,0.830) (1.093,0.816) (1.134,0.805) (1.191,0.923)
					(1.233,0.803) (1.258,0.658) (1.313,0.657) (1.378,0.705)
					(1.431,0.593) (1.466,0.591) (1.510,0.370)
				};
				
				\fill (stop) circle (2pt);
				
				\node[above=2pt] at (-0.54,1.48) {$\tilde X_k$};
				
				\node[annotation,anchor=west] at (1.82,0.70)
				{$\tilde X_k(\tilde\theta_k)\in\partial D_k$};
				
				\draw[-{Latex[length=2mm]},line width=0.6pt]
				(3.00,1.53) -- (1.61,0.48);
				
				\node[annotation,anchor=west] at (3.04,1.52)
				{$\tilde X_k(t)=\tilde X_k(\tilde\theta_k)$ for
					$t\geq\tilde\theta_k$};
				
				\draw[limitpath] plot coordinates {
					(-2.820,-0.980) (-2.761,-0.862) (-2.688,-0.895) (-2.648,-0.818)
					(-2.575,-0.873) (-2.532,-0.791) (-2.471,-0.750) (-2.409,-0.902)
					(-2.342,-0.781) (-2.280,-0.778) (-2.224,-0.797) (-2.171,-0.790)
					(-2.112,-0.877) (-2.021,-0.968) (-1.992,-1.008) (-1.934,-0.906)
					(-1.870,-1.131) (-1.821,-1.050) (-1.763,-1.010) (-1.681,-1.119)
					(-1.642,-1.155) (-1.575,-0.970) (-1.508,-0.912) (-1.454,-0.991)
					(-1.401,-1.003) (-1.368,-1.171) (-1.258,-1.043) (-1.238,-1.055)
					(-1.177,-0.910) (-1.105,-0.905) (-1.053,-0.934) (-0.972,-1.038)
					(-0.947,-0.886) (-0.851,-1.007) (-0.797,-0.967) (-0.747,-1.055)
					(-0.695,-1.094) (-0.634,-1.029) (-0.572,-0.985) (-0.522,-0.868)
					(-0.448,-0.836) (-0.404,-0.840) (-0.325,-0.809) (-0.284,-0.711)
					(-0.204,-0.699) (-0.161,-0.709) (-0.111,-0.506) (-0.038,-0.374)
					(-0.002,-0.406) (0.075,-0.337) (0.137,-0.339) (0.215,-0.397)
					(0.252,-0.459) (0.300,-0.501) (0.382,-0.484) (0.451,-0.388)
					(0.508,-0.537) (0.553,-0.374) (0.591,-0.246) (0.655,-0.179)
					(0.719,-0.238) (0.790,-0.247) (0.840,-0.308) (0.917,-0.243)
					(0.988,-0.194) (1.013,-0.211) (1.077,-0.114) (1.132,-0.101)
					(1.208,-0.088) (1.262,-0.156) (1.320,-0.007) (1.350,-0.056)
					(1.459,0.036) (1.499,0.062) (1.561,0.075) (1.606,-0.006)
					(1.655,-0.070) (1.714,-0.013) (1.762,-0.138) (1.850,-0.153)
					(1.917,-0.307) (1.974,-0.203) (2.012,-0.088) (2.066,-0.196)
					(2.165,-0.016) (2.194,-0.026) (2.267,-0.088) (2.319,-0.108)
					(2.388,-0.066) (2.418,-0.152) (2.505,-0.084) (2.566,-0.224)
					(2.625,-0.215) (2.663,-0.257) (2.732,-0.281) (2.781,-0.185)
					(2.844,-0.220) (2.926,-0.042) (2.982,-0.128) (3.028,-0.107)
					(3.084,-0.313) (3.155,-0.322) (3.205,-0.226) (3.256,-0.320)
					(3.321,-0.341) (3.385,-0.229) (3.449,-0.326) (3.484,-0.375)
					(3.560,-0.282) (3.607,-0.366) (3.682,-0.327) (3.739,-0.214)
					(3.794,-0.354) (3.836,-0.188) (3.920,-0.191) (3.957,-0.158)
					(4.028,-0.233) (4.072,-0.311) (4.155,-0.200) (4.206,-0.292)
					(4.257,-0.214) (4.334,-0.119) (4.402,-0.172) (4.455,-0.053)
					(4.491,-0.099) (4.549,-0.135) (4.631,-0.116) (4.680,0.081)
					(4.739,0.149) (4.819,0.143) (4.855,0.090) (4.917,0.114)
					(4.991,0.160) (5.019,0.200) (5.089,0.093) (5.173,0.215)
					(5.222,0.199) (5.263,0.265) (5.332,0.360) (5.384,0.520)
					(5.424,0.362) (5.501,0.587) (5.572,0.566) (5.627,0.442)
					(5.681,0.535) (5.759,0.443) (5.792,0.355) (5.858,0.279)
					(5.903,0.440) (5.980,0.280)
				};
				
				\draw[-{Latex[length=2.2mm]},draw=black!48,line width=0.8pt]
				(5.94,0.28) -- (6.22,0.35);
				
				\node at (-0.63,-0.78) {$\tilde{X}$};
				
			\end{tikzpicture}
			
			\caption{A schematic of the law-based approximation. The process
			$\tilde X_k$ is absorbed at $\tilde\theta_k$, whereas no pathwise
			agreement between $\tilde X_k$ and $\tilde X$ is imposed. The limiting
			procedure relies on convergence in law, $\mathcal L_{\tilde X_k}\Rightarrow\mathcal L_{\tilde{X}}$ as $k\to\infty$.}
		\label{fig:law-based-truncation-removal}
		\end{figure}
	    $\rm(H_2)$ and
		\eqref{truncated-lyap-uniform} give uniform integrability of the squared
		coefficients. We now identify the limiting martingale problem. Let $(\hat{\mathcal F}_s)_{1\leq s\leq T+1}$ be the usual augmentation of the canonical filtration generated by $(\tilde Z,\tilde W,\tilde A)$, and set
		\[
		\tilde{\mathcal F}_t:=\hat{\mathcal F}_{t+1},
		\,\,0\leq t\leq T.
		\]
		Repeating the preceding canonical-martingale argument, including the zero-quadratic-variation argument for the stochastic-integral identity, and using uniform integrability
		together with $\lim\limits_{k\rightarrow\infty}\tilde\theta_k=\infty$ almost surely, we obtain that $\tilde B$ is a Brownian motion with respect to $(\tilde{\mathcal F}_t)_{0\leq t\leq T}$ and
		\[
		\tilde X(t)=\tilde X(0)
		+\int_0^t b(r,\tilde X(r),\tilde\mu(r))\,dr
		+\int_0^t\sigma(r,\tilde X(r),\tilde\mu(r))\,d\tilde B(r).
		\]
		Moreover, \eqref{initial-truncation-convergence} gives
		$\mathcal L_{\tilde X(0)}=\mu_0$, and Fatou's lemma together with
		\eqref{truncated-lyap-uniform} yields
		\[
		\sup_{0\leq t\leq T}\mathcal V(t,\tilde\mu(t))
		\leq C_{\mathcal V}(T).
		\]
		Thus (\ref{2.1}) has a weak solution $(\tilde{X},\tilde{B})$. We note that the weak solution $(\tilde{X},\tilde{B})$ constructed above
		is compatible. Indeed, on the canonical space the filtration
		$(\tilde{\mathcal F}_t)$ is the augmentation of the filtration generated by $(\tilde X,\tilde B)$, and $\tilde B$ is a Brownian motion with respect to
		$(\tilde{\mathcal F}_t)_{0\leq t\leq T}$. Hence its future increments are independent of $\tilde{\mathcal{F}}_t$, which implies the required compatibility condition (see the argument in \cite[Section~5]{CAR}). Thus, \eqref{2.1} admits a compatible
		$\mathfrak K_{\mathcal V}$-admissible weak solution. Lemma~\ref{lem:yw-lyapunov-class}, together with the pathwise
		uniqueness proved in part~(i), now yields the asserted strong solution
		on the prescribed stochastic basis.
	\end{proof}
	
	\begin{rem}\label{rem:monotonicity}
        (1) The monotonicity condition requires some $\gamma\in L^1(0,T)$ such that
    \begin{equation}\label{standard-monotonicity}
    	2\langle x-y,b(t,x,\mu)-b(t,y,\nu)\rangle
    	+\|\sigma(t,x,\mu)-\sigma(t,y,\nu)\|_{\mathbb{R}^{d\times m}}^2\leq \gamma(t)\bigl(\|x-y\|_{\mathbb{R}^d}^2+\mathbb W_2^2(\mu,\nu)\bigr).
    \end{equation}
    Condition~${\rm(H_4)}$ departs from \eqref{standard-monotonicity} in two essential ways. First, it controls $\|\Delta b\|_{\mathbb{R}^d}^2+\|\Delta\sigma\|_{\mathbb{R}^{d\times m}}^2$ rather than
    $2\langle x-y,\Delta b\rangle+\|\Delta\sigma\|_{\mathbb{R}^{d\times m}}^2$, thereby restricting the coefficient increments directly without exploiting the sign of the drift.  Second, it admits the non-integrable weight $\frac{u'(t)}{u(t)}$ in place of $\gamma\in L^1$.  The example in Section~4 satisfies ${\rm(H_4)}$ but violates \eqref{standard-monotonicity}, and thus separates the Perron--Nagumo-type condition from \eqref{standard-monotonicity}.
    
    The structure of ${\rm(H_4)}$ can be modified in a natural way.  If one keeps
    the Perron--Nagumo right-hand side unchanged but replaces $\|\Delta b\|_{\mathbb{R}^d}^2$
    by the signed inner product $2\langle x-y,\Delta b\rangle$, the condition becomes
    \begin{equation}\label{3.35}
    	\begin{aligned}
    		&2\langle x-y,b(t,x,\mu)-b(t,y,\nu)\rangle
    		+\|\sigma(t,x,\mu)-\sigma(t,y,\nu)\|_{\mathbb{R}^{d\times m}}^2\\
    		&\leq \alpha_1\bigl\{\omega(t,\|x-y\|_{\mathbb{R}^d}^2)+\omega(t,\mathbb W_2^2(\mu,\nu))\bigr\}
    		+\alpha_2\frac{u'(t)}{u(t)}\bigl\{\|x-y\|_{\mathbb{R}^d}^2+\mathbb W_2^2(\mu,\nu)\bigr\}.
    	\end{aligned}
    \end{equation}
    It\^o's formula yields a similar estimate to~\eqref{3.6}, but without the factor $T\vee1$. Accordingly, it suffices to assume
    $\alpha_2<\frac{1}{2}$ and to take $\alpha=\frac{2\alpha_1u(T)}{1-2\alpha_2}$. The remainder of the uniqueness proof carries over without any further change.
    
    Condition~\eqref{3.35} occupies an intermediate position between
    \eqref{standard-monotonicity} and ${\rm(H_4)}$.  It shares the inner-product
    structure with \eqref{standard-monotonicity} and therefore handles dissipative
    drifts particularly well, yet it inherits the non-integrable singular weight
    from ${\rm(H_4)}$. Conversely, ${\rm(H_4)}$ imposes a direct bound on $\|\Delta b\|_{\mathbb{R}^d}^2$ and is therefore independent of the sign of $\langle x-y,\Delta b\rangle$.
     
    (2) In the path-dependent framework of~\cite{RP}, the local Lipschitz condition allows one to truncate the coefficients on bounded subsets of the path space, so that the truncated equation recovers the Lipschitz and monotonicity conditions required for the Borel--Cantelli argument.  In the present paper, the Lipschitz and monotonicity conditions are replaced by the Perron--Nagumo condition ${\rm(H_4)}$. The path-space truncation strategy of~\cite{Liu-Ma,RP} therefore does not apply here. This explains why we rely on weak convergence, which also differs from the classical methods.
    
    \end{rem}
	
	\section{Example}
	
	We give an explicit MVSDE that satisfies Theorem~\ref{thm} but falls outside the Lipschitz, Osgood, and monotonicity conditions.
	
	For $T=\frac{1}{2}$ and $d=m=1$, we consider the following coefficients:
	\begin{equation}\label{4.1}
		b_1(t,x):=
		\begin{cases}
			0, &t\in[0,\frac{1}{2}],\,x\in(-\infty,0),\\
			\frac{\sqrt{6}}{2}xt^\frac{-1}{4},&t\in(0,\frac{1}{2}],\,x\in[0,\frac{2}{3}t),\\
			x^\frac{1}{2}t^\frac{1}{4}, &t\in[0,\frac{1}{2}],\,x\in[\frac{2}{3}t,\frac{\sqrt{2}}{2}],\\
			(\frac{t}{2})^\frac{1}{4},&t\in[0,\frac{1}{2}],\,x\in(\frac{\sqrt{2}}{2},\infty),	
		\end{cases}
	\end{equation}
	and 
	\begin{equation}\label{4.2}
		\sigma_1(t,x):=
		\begin{cases}
			0, &t\in[0,\frac{1}{2}],\, x\in(-\infty,0),\\
			\frac{x}{5\sqrt{t}}, &t\in(0,\frac{1}{2}],\,x\in[0,t),\\
			\frac{\sqrt{t}}{5}, &t\in[0,\frac{1}{2}],\,x\in[t,\infty).
		\end{cases}
	\end{equation}
	As in the example in \cite{LZ}, the function $x\mapsto b_1(t,x)$ is concave and nondecreasing on $[0,\infty)$ for every $t\in[0,\frac{1}{2}]$. We have 
	\[
	\big|b_1(t,x)-b_1(t,y)\big|\leq b_1(t,|x-y|),~x,y\in\mathbb{R}.
	\]
	Define the function $\psi:[0,\frac{1}{2}]\times[0,\infty)\rightarrow[0,\infty)$ by
	\[
	\psi(t,x):=
	\begin{cases}
		\sqrt{x}, &t=0,\,x\in[0,\frac{1}{2}],\\[2pt]
		\frac{\sqrt{2}}{2}, &t=0,\,x\in(\frac{1}{2},\infty),\\[2pt]
		\frac{3x}{2t}, &t\in(0,\frac{1}{2}],\,x\in[0,\frac{4t^2}{9}),\\[2pt]
		\sqrt{x}, &t\in(0,\frac{1}{2}],\,x\in[\frac{4t^2}{9},\frac{1}{2}],\\[2pt]
		\frac{\sqrt{2}}{2}, &t\in(0,\frac{1}{2}],\,x\in(\frac{1}{2},\infty).
	\end{cases}
	\]
	Then we have
	\[
	\big|b_1(t,x)-b_1(t,y)\big|^2\leq b_1^2(t,|x-y|)=\sqrt{t}\psi(t,|x-y|^2)=\omega(t,|x-y|^2),~x,y\in\mathbb{R},
	\] 
	where the function $\omega:[0,\frac{1}{2}]\times[0,\infty)\rightarrow[0,\infty)$ is defined by $\omega(t,x):=\sqrt{t}\psi(t,x)$. For every $t$, the function $x\mapsto\psi(t,x)$ is concave and nondecreasing. By \cite[Proposition 1]{AUG}, $\lambda\psi(t,x)$ satisfies the Perron condition for $\lambda\in(0,\frac{4}{3})$ and satisfies neither the Lipschitz condition nor the Osgood condition. 
	
	Let $h:\mathbb{R}\rightarrow\mathbb{R}$ be a 1-Lipschitz function, and define $F_C:\mathbb{R}\rightarrow\mathbb{R}$ by
	\[
	F_C(x):=
	\begin{cases}
		|x|, &|x|\leq C,\\
		C, &|x|>C,
	\end{cases}
	\]
	where $C>0$ is a constant. Define a continuous function $f:\mathcal{P}_2(\mathbb{R})\rightarrow\mathbb{R}$ by
	$f(\mu):=h\big(\int_{\mathbb{R}}F_C(x)\mu(dx)\big)$. $f$ is continuous with respect to weak convergence, and hence also with respect to the $\mathbb{W}_2$-topology. Thus we have
	\[
	|f(\mu)-f(\nu)|=\big|h\big(\int_{\mathbb{R}}F_C(x)\mu(dx)\big)-h\big(\int_{\mathbb{R}}F_C(y)\nu(dy)\big)\big|\leq\big|\int_{\mathbb{R}}F_C(x)\mu(dx)-\int_{\mathbb{R}}F_C(y)\nu(dy)\big|
	\]
	for every $\mu,\nu\in\mathcal{P}_2(\mathbb{R})$. Then, for any $\gamma\in\Gamma(\mu,\nu)$, where $\Gamma(\mu,\nu)$ stands for the set of all couplings for $\mu$ and $\nu$, we have 
	\[
	\big|f(\mu)-f(\nu)\big|\leq\big|\int_{\mathbb{R}\times\mathbb{R}}\big(F_C(x)-F_C(y)\big)\gamma(dx,dy)\big|\leq\Big(\int_{\mathbb{R}\times\mathbb{R}}\big|F_C(x)-F_C(y)\big|^2\gamma(dx,dy)\Big)^{\frac{1}{2}}.
	\]
	By the definition of $F_C(x)$, we have
	\[
	|F_C(x)-F_C(y)|\leq|x-y|,
	\,\,x,y\in\mathbb R.
	\]
	This implies
	\begin{equation}\label{4.3}
		\big|f(\mu)-f(\nu)\big|\leq\Big(\inf\limits_{\gamma\in\Gamma(\mu,\nu)}\int_{\mathbb{R}\times\mathbb{R}}|x-y|^2\gamma(dx,dy)\Big)^{\frac{1}{2}}=\mathbb{W}_2(\mu,\nu).
	\end{equation}
	Define $b_2(t,\mu):=b_1(t,f(\mu))$, fix constants $a_1,a_2>0$, and set
	\begin{equation}\label{4.4}
		b(t,x,\mu):=\frac{\sqrt{2a_1}}{2}\big(b_1(t,x)+b_2(t,\mu)\big).
	\end{equation}
	Thus, we have
	\[
	\begin{aligned}
		\big|b(t,x,\mu)-b(t,y,\nu)\big|^2&=\frac{a_1}{2}\big|b_1(t,x)-b_1(t,y)+b_2(t,\mu)-b_2(t,\nu)\big|^2\\
		&\leq a_1\big|b_1(t,x)-b_1(t,y)\big|^2+a_1\big|b_1(t,f(\mu))-b_1(t,f(\nu))\big|^2\\
		&\leq a_1\big\{\omega(t,|x-y|^2)+\omega(t,|f(\mu)-f(\nu)|^2)\big\}\\
		&\leq a_1\big\{\omega(t,|x-y|^2)+\omega\big(t,\mathbb{W}_2^2(\mu,\nu)\big)\big\},
	\end{aligned}
	\]
	for $x,y\in\mathbb{R}$ and $\mu,\nu\in\mathcal{P}_2(\mathbb{R})$. 
	
	Furthermore, it follows from \cite{NEG} that
	\[
	|\sigma_1(t,x)-\sigma_1(t,y)|^2\leq\frac{1}{25t}|x-y|^2\leq\frac{1}{8}\frac{u'(t)}{u(t)}|x-y|^2,~t\in(0,\frac{1}{2}],~x,y\in\mathbb{R},
	\]
	where $u(t)=\sqrt{t}$. If $\sigma_1$ were Lipschitz with constant $L>0$, then, for every $t\in(0,\frac{1}{2}]$,  
	\[
	|\sigma_1(t,t)-\sigma_1(t,0)|=\frac{\sqrt{t}}{5}\leq Lt.
	\]
	Hence $t\geq\frac{1}{25L^2}$, which is impossible for sufficiently small $t$. We then define the continuous function $\sigma_2:[0,\frac{1}{2}]\times\mathcal{P}_2(\mathbb{R})\rightarrow\mathbb{R}$ by $\sigma_2(t,\mu):=\sigma_1(t,f(\mu))$ and the coefficient $\sigma:[0,\frac{1}{2}]\times\mathbb{R}\times\mathcal{P}_2(\mathbb{R})\rightarrow\mathbb{R}$ by
	\begin{equation}\label{4.5}	
		\sigma(t,x,\mu):=\frac{\sqrt{2a_2}}{2}\Big(\sigma_1(t,x)+\sigma_2(t,\mu)\Big).
	\end{equation}
	Thus, we have
	\[
	\begin{aligned}
		\big|\sigma\big(t,x,\mu\big)-\sigma\big(t,y,\nu\big)\big|^2&=\frac{a_2}{2}\big|\sigma_1(t,x)-\sigma_1(t,y)+\sigma_2(t,\mu)-\sigma_2(t,\nu)\big|^2\\
		&\leq a_2\big|\sigma_1(t,x)-\sigma_1(t,y)\big|^2+a_2\big|\sigma_1(t,f(\mu))-\sigma_1(t,f(\nu))\big|^2\\
		&\leq\frac{a_2}{8}\frac{u'(t)}{u(t)}\Big\{|x-y|^2+\mathbb{W}_2^2(\mu,\nu)\Big\}.
	\end{aligned}
	\]
	Consequently, ${\rm(H_4)}$ holds with $\alpha_1=a_1$ and $\alpha_2=a_2/8$, provided $0<a_2<2$ and
	\[
	\alpha=\frac{4\sqrt{2}a_1}{2-a_2}\in\Big[0,\frac43\Big).
	\]
	An upper function required in ${\rm(H_4)}$ may be chosen as $\psi^*(t)=ct$ with $c>\frac{\sqrt{2}\alpha}{2}$ because $\psi(t,x)\leq\sqrt2/2$.
	
	This example also fails the standard monotonicity condition \eqref{standard-monotonicity}. Indeed, fix $\mu=\nu$, set $y=0$ and $x=t$, and let $t\downarrow0$. Since $b_1(t,\cdot)$ is nondecreasing, the drift contribution on the left-hand side of \eqref{standard-monotonicity} is nonnegative, whereas
	\[
	\big|\sigma(t,t,\mu)-\sigma(t,0,\mu)\big|^2
	=\frac{a_2}{2}\big|\sigma_1(t,t)-\sigma_1(t,0)\big|^2
	=\frac{a_2}{50}t.
	\]
	Thus \eqref{standard-monotonicity} would force $\gamma(t)\geq a_2/(50t)$, which is impossible for $\gamma\in L^1(0,\frac12)$. Nevertheless, the normalized Perron--Nagumo argument applies and yields pathwise uniqueness among strong solutions satisfying \eqref{moment}.
	
	Finally, $b$ and $\sigma$ are bounded and continuous, so ${\rm(H_1)}$ holds. With $V_0(x):=|x|^2$, $V(t,x,\mu):=|x|^2+\mu(|\cdot|^2)$, ${\rm(H_2)}$ follows immediately. Moreover, Young's inequality and boundedness of the coefficients give ${\rm(H_3)}$ uniformly in $k$. Hence, for every $X_0\in L^{2l}$ and $\mathcal{V}(0,\mathcal{L}_{X_0})<\infty$, Theorem~\ref{thm} yields a strong solution satisfying \eqref{moment}. This solution is unique among strong solutions satisfying \eqref{moment}, although none of the Lipschitz, Osgood, or monotonicity conditions holds.
	
	\section*{Acknowledgements}
	
	This work is supported by the National Key R\&D Program of China (No. 2023YFA1009200),
	NSFC (Grants 12531009 and 11925102).

\end{document}